\newtheorem{theorem}{Theorem}[section]
\newtheorem{proposition}[theorem]{Proposition}
\newtheorem{lemma}[theorem]{Lemma}
\newtheorem{corollary}[theorem]{Corollary}
\theoremstyle{definition}
\newtheorem{definition}[theorem]{Definition}
\newtheorem{remark}[theorem]{Remark}
\numberwithin{equation}{section}
\DeclareMathOperator*{\Supp}{Supp}
\DeclareMathOperator*{\Vect}{Vect}
\DeclareMathOperator*{\Gal}{Gal}
\newcommand{\cO}{\mathcal{O}}
\newcommand{\scrF}{\mathcal{F}}
\newcommand{\scrE}{\mathcal{E}}
\newcommand{\scrV}{\mathcal{V}}
\newcommand{\scrK}{\mathcal{K}}
\newcommand{\scrC}{\mathcal{C}}
\newcommand{\scrf}{\mathfrak{f}}
\begin{document}

\title[Characterization of genuine ramification using formal orbifolds]{Characterization of
genuine ramification using formal orbifolds}

\author[I. Biswas]{Indranil Biswas}

\address{Department of Mathematics, Shiv Nadar University, NH91, Tehsil
Dadri, Greater Noida, Uttar Pradesh 201314, India}

\email{indranil.biswas@snu.edu.in, indranil29@gmail.com}

\author[M. Kumar]{Manish Kumar}

\address{Statistics and Mathematics Unit, Indian Statistical Institute,
Bangalore 560059, India}

\email{manish@isibang.ac.in}

\author[A.J. Parameswaran]{A. J. Parameswaran}

\address{School of Mathematics, Tata Institute of Fundamental
Research, Homi Bhabha Road, Bombay 400005, India}

\email{param@math.tifr.res.in}

\subjclass[2010]{14H30, 14G17, 14H60}

\keywords{Genuine ramification, branch data, formal orbifold, fundamental group}

\begin{abstract}
We give a characterization of genuinely ramified maps of formal orbifolds in the Tannakian framework. In particular we show that a 
morphism is genuinely ramified if and only if the pullback of every stable bundle remains stable in the orbifold category. We
also give some other
characterizations of genuine ramification. This generalizes the results of \cite{BKP-GR1} and \cite{BP-GR}. In fact, it is a positive 
characteristic analogue of results in \cite{GR-char0}.
\end{abstract}

\maketitle

\section{Introduction}

Let $f:Y\,\longrightarrow\, X$ be a finite generically smooth morphism of smooth projective connected curves over an algebraically closed field $k$. Then $f$ is
called genuinely ramified if there is no intermediate nontrivial \'etale cover of $X$. This notion
of genuine ramification admits several equivalent formulations. For instance $f$ is genuinely ramified if and only if
the maximal semistable subsheaf of $f_*\cO_Y$ is $\cO_X$, or if and only if the induced map of the \'etale fundamental groups is surjective,
or if and only if $f^*E$ is stable for every stable vector bundle $E$ on $X$, etcetera (see \cite{BP-GR}).

This notion of genuine ramification extends to the more general context of orbifolds. A formal orbifold curve is a pair $(X,\,P)$, where 
$X$ is a smooth projective curve and $P$ is a ``branch data'' on $X$ (the definition is recalled in Section \ref{se1}; see
\cite{KP} for more 
details). Since we will only be dealing with curves, for us a formal orbifold will mean a formal orbifold curve. A morphism of formal orbifolds $(Y,\,Q)\,\longrightarrow\, (X,\,P)$ is a finite generically smooth morphism of curves
$f\,:\,Y\,\longrightarrow\, X$ such that $Q(y)\,\supset\, P(f(y))$ for all closed points $y\,\in\, Y$. This morphism
of formal orbifolds is \'etale if the equality $Q(y)\,=\, P(f(y))$ holds 
for all $y\,\in\, Y$. A morphism of formal orbifolds is called genuinely ramified if there is no intermediate nontrivial \'etale cover
(see Definition \ref{3.1}).

A ``geometric'' formal orbifold $(X,\,P)$ is one for which there exists a Galois \'etale cover of formal 
orbifolds $g\,:\,(W,\,O)\,\longrightarrow\, (X,\,P)$, where $O$ denotes the trivial branch data. In \cite{KP} 
a vector bundle on such a pair $(X,\,P)$ was defined to be a $G$-equivariant vector bundle on $W$, where $G$ 
is the Galois group for the map $g$. The notions of degree, slope, semistability etcetera were also defined in 
\cite{KP}. When the characteristic of the base field $k$ is zero, a branch data on $X$ is the same as 
assigning integers greater than 1 at finitely many closed points of $X$. This in turn is equivalent to giving 
$X$ an orbifold structure. When the characteristic of the base field $k$ is zero, the equivariant bundles are 
also called orbifold bundles and they are equivalent to (see \cite{Bi}) the parabolic bundles introduced by 
Mehta and Seshadri (\cite{MS}, see also \cite{MY}). In positive characteristic this result was proved in 
\cite{KM} where the notion of parabolic bundles is different and it is a slight variant of Nori's definition 
(given in \cite{Nori}).

The following results were proved in \cite{GR-char0}. However their formulation in \cite{GR-char0} involved 
the terminology of orbifolds.

\begin{proposition}\label{prop-i}
Let $f\,:\,Y\, \longrightarrow \,X$ be a morphism of irreducible smooth curves over an algebraically closed field of characteristic zero.
Let $X$ be equipped with a branch data $P$. Then $f_*{\mathcal O}_Y$ is a semistable ``parabolic bundle'' of degree zero belonging to 
$\Vect(X,\,B_f)$ and so it is semistable in $\Vect(X,\,PB_f)$. Then the maximal subbundle of $f_*{\mathcal O}_Y$ belonging to $\Vect(X,\,P)$ is 
a sheaf of subalgebras in $f_*{\mathcal O}_Y$ under parabolic tensor product. Moreover the corresponding spectrum defines the maximal
cover $g\,:\,Z\,\longrightarrow\, X$ such that $(Z,\,g^*P)\,\longrightarrow\, (X,\,P)$ is \'etale and $f$ dominates $g$.
\end{proposition}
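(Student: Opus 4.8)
The plan is to carry out the argument inside the category of semistable degree-zero parabolic bundles, in three steps: control $f_*\cO_Y$ itself; extract the maximal $\Vect(X,\,P)$-subbundle and equip it with an algebra structure; and then identify its spectrum with the asserted maximal cover.

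For the first step, recall that by construction $B_f$ is the branch data on $X$ rendering the induced morphism of formal orbifolds $(Y,\,O)\,\longrightarrow\,(X,\,B_f)$ \'etale, and $f_*\cO_Y$, with its canonical parabolic structure supported on the branch locus, is the pushforward of $\cO_Y$ along this \'etale morphism, so it is an object of $\Vect(X,\,B_f)$. To see that it is semistable of degree (equivalently slope) zero, pass to the Galois closure $\widetilde f\,:\,\widetilde Y\,\longrightarrow\, X$ of $f$: then $(\widetilde Y,\,O)\,\longrightarrow\,(X,\,B_f)$ is Galois \'etale with group $G$, and $Y\,=\,\widetilde Y/H$ for $H\,=\,\Gal(\widetilde Y/Y)$. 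Under $\Vect(X,\,B_f)\,\simeq\,\Vect^{G}(\widetilde Y)$ the bundle $\widetilde f_*\cO_{\widetilde Y}$ corresponds to $k[G]\otimes\cO_{\widetilde Y}$, whose underlying bundle $\cO_{\widetilde Y}^{|G|}$ is semistable of slope zero on $\widetilde Y$; hence $\widetilde f_*\cO_{\widetilde Y}$ is semistable of slope zero in $\Vect(X,\,B_f)$, and $f_*\cO_Y\,=\,(\widetilde f_*\cO_{\widetilde Y})^{H}$ is a subobject of it, so semistable of slope $\le 0$, while $\deg f_*\cO_Y\,=\,0$ since pushforward along an \'etale morphism of formal orbifolds preserves the degree of the structure sheaf. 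Thus $f_*\cO_Y$ is semistable of degree zero in $\Vect(X,\,B_f)$. Since $PB_f\,\ge\, B_f$, the change-of-branch-data functor $\Vect(X,\,B_f)\,\longrightarrow\,\Vect(X,\,PB_f)$ only adjoins trivial parabolic data at the extra points, hence preserves degree and semistability, which gives the first sentence of the statement.

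For the second step, work inside the full subcategory $\mathcal{S}\,\subset\,\Vect(X,\,PB_f)$ of semistable parabolic bundles of degree zero. Two inputs are needed: in characteristic zero the parabolic tensor product of two objects of $\mathcal{S}$ again lies in $\mathcal{S}$, so $\mathcal{S}$ is an abelian $\otimes$-category and $f_*\cO_Y$ is an algebra object of it; and the change-of-branch-data functor $\Vect(X,\,P)\,\longrightarrow\,\Vect(X,\,PB_f)$ is fully faithful, exact, monoidal, and has essential image closed under subobjects and quotients in $\Vect(X,\,PB_f)$ (at each point the ``extra'' parabolic data is trivial, and triviality passes to sub- and quotient sheaves). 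It follows that the sum of all those $\mathcal{S}$-subobjects of $f_*\cO_Y$ that belong to the image of $\Vect(X,\,P)$ is again such a subobject: in the Noetherian category it is a finite sum, hence a quotient of a direct sum of objects of this kind, hence again semistable of degree zero (its minimal slope is $\ge 0$ as a quotient, its maximal slope $\le 0$ as a subobject of $f_*\cO_Y$) and again in $\Vect(X,\,P)$. Call this $\scrE$; it is the maximal subbundle of $f_*\cO_Y$ of this type and it contains $\cO_X$ (the constants). To see that $\scrE$ is closed under multiplication, restrict $f_*\cO_Y\otimes f_*\cO_Y\,\longrightarrow\, f_*\cO_Y$ to $\scrE\otimes\scrE\,\in\,\mathcal{S}$; its image, being simultaneously a quotient of $\scrE\otimes\scrE$ and a subobject of $f_*\cO_Y$, lies in $\mathcal{S}$ and, being a quotient of an object of $\Vect(X,\,P)$, belongs to $\Vect(X,\,P)$; by maximality it is contained in $\scrE$. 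The unit $\cO_X\,\hookrightarrow\, f_*\cO_Y$ factors through $\scrE$, associativity and commutativity are inherited, and $\scrE$ is reduced as a subsheaf of the reduced sheaf $f_*\cO_Y$. Hence $\scrE$ is a reduced commutative sheaf of $\cO_X$-subalgebras of $f_*\cO_Y$ inside $\Vect(X,\,P)$ under parabolic tensor product.

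For the third step, set $g\,:\,Z\,=\,\underline{\mathrm{Spec}}_{(X,P)}(\scrE)\,\longrightarrow\, X$. At the generic point $\scrE$ is a subalgebra of the function field of $Y$ containing that of $X$, hence a field, so $Z$ is irreducible, and the algebra inclusion $\scrE\,\hookrightarrow\, f_*\cO_Y$ gives a factorization of $f$ through $g$, i.e.\ $f$ dominates $g$. Since $\scrE$ is a reduced $\cO_X$-algebra that is semistable of degree zero in $\Vect(X,\,P)$, the local Riemann--Hurwitz computation (carried out over the local covers prescribed by $P$) shows that the ramification of $Z\,\longrightarrow\, X$ is exactly absorbed by $P$, so $(Z,\,g^{*}P)\,\longrightarrow\,(X,\,P)$ is \'etale. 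Conversely, if $g'\,:\,Z'\,\longrightarrow\, X$ is any cover with $(Z',\,g'^{*}P)\,\longrightarrow\,(X,\,P)$ \'etale and $f$ dominating $g'$, then by the argument of the first step applied to $g'$ the sheaf $g'_{*}\cO_{Z'}$ is semistable of degree zero in $\Vect(X,\,P)$ and embeds as an $\cO_X$-subalgebra of $f_*\cO_Y$; hence $g'_{*}\cO_{Z'}\,\subseteq\,\scrE$ by maximality, so $Z$ dominates $Z'$. This exhibits $g$ as the maximal cover with the stated properties.

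I expect the crux to be the second step: establishing that the change-of-branch-data functor has essential image closed under subquotients (so that ``belonging to $\Vect(X,\,P)$'' survives every operation used above) and that in characteristic zero semistability of degree zero is preserved by the parabolic tensor product (so that $\mathcal{S}$ is genuinely a tensor category and $\scrE\otimes\scrE$ stays in it), together with the compatible bookkeeping of parabolic degrees under these functors. Granting these, the subalgebra property and the \'etaleness and maximality of $g$ follow formally.
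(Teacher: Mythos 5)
A preliminary but important point: the paper does not prove Proposition \ref{prop-i} at all --- it is quoted from \cite{GR-char0} as the characteristic-zero model for the results proved here, so there is no in-paper proof to compare you against. Judged on its own, your outline follows the natural route, and its first two steps track closely the machinery the paper \emph{does} develop for the positive-characteristic analogue: your Step 1 is essentially Lemma \ref{directimage} (pass to the Galois closure $X_e$, where $\widehat{f}_*\cO_Y$ becomes $k[G/H]\otimes_k\cO_{X_e}$, visibly semistable of degree zero), your extraction of the maximal slope-zero subobject from $\Vect(X,\,P)$ is Lemma \ref{maximalsubbundle}, and the "from $\Vect(X,\,P)$" bookkeeping is Lemma \ref{semistability} and Definition \ref{2.2}. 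For the final assertion the paper takes a genuinely different route in Section \ref{TC}: Lemma \ref{lem-n} produces the maximal intermediate \'etale cover Tannakianly, as $X_e/\ker\alpha$, without ever putting an algebra structure on the maximal subbundle. Your Spec-of-the-subalgebra argument is the characteristic-zero-style alternative, and it is the one that actually yields the "sheaf of subalgebras" clause of the statement.

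Two points need repair. First, your stated input that the essential image of $\Vect(X,\,P)\longrightarrow\Vect(X,\,PB_f)$ is closed under \emph{subobjects} is false: in the equivariant model this functor is pullback along a ramified Galois cover $q\colon W'\longrightarrow W$, and already for $W'=\mathrm{Spec}\,k[[t]]$, $W=\mathrm{Spec}\,k[[t^n]]$, the inertia-stable subsheaf $t\cO_{W'}\subset\cO_{W'}=q^*\cO_{W}$ is not a pullback, since inertia acts by the character $\zeta\longmapsto\zeta^{-1}$ on its fibre at the origin. Fortunately you only ever invoke closure under locally free \emph{quotients} (images inside $f_*\cO_Y$), and that half is correct precisely because fibres of images are quotients of fibres, so triviality of the inertia action descends; but your parenthetical justification conflates the two cases, and the subobject half should be deleted --- if it were true, "being from $\Vect(X,\,P)$" would be a far cruder condition than it is. Second, the \'etaleness of $(Z,\,g^*P)\longrightarrow(X,\,P)$ is the real content of the last clause, and "the local Riemann--Hurwitz computation" is too thin as written. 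The clean argument: realize $\scrE$ as a $\Gamma$-equivariant sheaf of $\cO_W$-algebras $E_W$ of degree zero on a Galois \'etale cover $(W,\,O)\longrightarrow(X,\,P)$; note $E_W$ is integrally closed (its normalization inside the pullback of $f_*\cO_Y$ has degree between $0$ and $0$ by semistability, hence equals $E_W$), so $\mathrm{Spec}(E_W)$ is a smooth curve whose map to $W$ has structure-sheaf pushforward of degree $0$, which in characteristic zero forces it to be \'etale via the discriminant; quotienting by $\Gamma$ then gives the claim. With those two repairs the sketch is a correct reconstruction.
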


\begin{theorem}\label{thm-i}
Let $f\,:\,Y\, \longrightarrow \,X$ be a morphism of irreducible smooth curves over an algebraically closed field of characteristic
zero, and let $P$ be a branch data on $X$. Then the following statements are equivalent:
\begin{enumerate}
\item The induced map $\pi(f)\,:\,\pi_1(Y,\,f^*P)\,\longrightarrow\, \pi_1(X,\,P)$ is surjective.

\item Every stable parabolic bundle in $\Vect(X,\,P)$ pulls back to a stable
parabolic vector bundle in $\Vect(Y,\,f^*P)$.

\item The maximal subbundle of $f_*{\mathcal O}_Y$ of degree zero belonging to $\Vect(X,\,P)$ is ${\mathcal O}_X$.
\end{enumerate}
\end{theorem}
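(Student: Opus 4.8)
The plan is to establish the cycle $(1)\Leftrightarrow(3)$ and $(3)\Rightarrow(2)\Rightarrow(1)$, using Proposition \ref{prop-i} as the bridge between the bundle $f_*\mathcal{O}_Y$ and the intermediate \'etale covers of $(X,\,P)$, and using Harder--Narasimhan theory on the relevant orbifold categories (legitimate here since the characteristic is zero, so tensor products of semistable bundles are semistable, and pullback of a semistable bundle along a finite morphism of formal orbifolds is semistable).

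First I would dispose of $(1)\Leftrightarrow(3)$, which is almost formal given Proposition \ref{prop-i}. Write $V$ for the maximal subbundle of $f_*\mathcal{O}_Y$ of degree zero belonging to $\Vect(X,\,P)$. By the proposition $V$ is a sheaf of subalgebras, its relative spectrum $Z$ over $X$ carries the structure of an \'etale cover $g\,:\,(Z,\,g^*P)\,\longrightarrow\,(X,\,P)$ that is dominated by $f$ (so $f$ factors as $Y\,\longrightarrow\, Z\,\longrightarrow\, X$), and $g$ is maximal among such \'etale covers. Hence $V=\mathcal{O}_X$ if and only if $Z=X$, i.e. if and only if $f$ admits no nontrivial intermediate \'etale cover, i.e. $f$ is genuinely ramified. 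On the other hand, by the Galois correspondence for $\pi_1(X,\,P)$ a connected \'etale cover of $(X,\,P)$ of degree $d$ corresponds to an open subgroup of index $d$; the factorization $Y\to Z\to X$ shows that the image of $\pi(f)$ is contained in the open subgroup attached to $g$, so surjectivity of $\pi(f)$ forces $\deg g=1$, while conversely if $g$ is trivial then (again by maximality of $g$) $f$ has no nontrivial intermediate \'etale cover and $\pi(f)$ is surjective. Thus $(1)\Leftrightarrow(3)$, and both are equivalent to $f$ being genuinely ramified.

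Next, $(3)\Rightarrow(2)$. Let $E\in\Vect(X,\,P)$ be stable. Passing to the branch data $B_f$ and $PB_f$ of Proposition \ref{prop-i}, so that $f$ becomes an \'etale morphism of formal orbifolds, the pullback $f^*E$ is semistable, and semistability is unaffected by the comparison between the branch data recorded in that proposition. It remains to prove $\operatorname{End}(f^*E)=k$. By adjunction and the projection formula, $\operatorname{End}_{(Y,f^*P)}(f^*E)\cong\operatorname{Hom}_{(X,P)}(E,\,E\otimes f_*\mathcal{O}_Y)$, with $f_*\mathcal{O}_Y$ regarded in $\Vect(X,\,P)$; by Proposition \ref{prop-i} and hypothesis $(3)$ this bundle has all Harder--Narasimhan slopes $\le 0$, with slope-zero part exactly $V=\mathcal{O}_X$. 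Since $E$ is stable, the image of any homomorphism $E\to E\otimes f_*\mathcal{O}_Y$ is a stable quotient of $E$, so it is contained in the maximal subobject of $E\otimes f_*\mathcal{O}_Y$ whose Harder--Narasimhan slopes are $\ge\mu(E)$, namely the copy of $E\otimes\mathcal{O}_X=E$ sitting inside via the unit $\mathcal{O}_X\hookrightarrow f_*\mathcal{O}_Y$. Hence $\operatorname{Hom}_{(X,P)}(E,\,E\otimes f_*\mathcal{O}_Y)=\operatorname{End}(E)=k$, and $f^*E$, being semistable with $\operatorname{End}(f^*E)=k$, is stable.

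Finally, $(2)\Rightarrow(1)$, which I expect to be the main difficulty; I would prove it by contraposition. If $\pi(f)$ is not surjective then, by $(1)\Leftrightarrow(3)$, $f$ factors through a nontrivial connected \'etale cover $g\,:\,(Z,\,g^*P)\,\longrightarrow\,(X,\,P)$, say $f=g\circ h$ with $h\,:\,Y\to Z$. It suffices to exhibit a stable $E\in\Vect(X,\,P)$ with $g^*E$ not stable: then $g^*E$ is semistable with a proper subobject of the same slope, and since $h^*$ multiplies all slopes by $\deg h$ it carries this to a proper subobject of $f^*E=h^*g^*E$ of the same slope as $f^*E$, so $f^*E$ is not stable, contradicting $(2)$. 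To construct such an $E$ I would pass to a finite Galois subcover $\widetilde g\,:\,(\widetilde Z,\,\widetilde g^*P)\,\longrightarrow\,(X,\,P)$ with Galois group $G$ and $H=\Gal(\widetilde Z/Z)\subsetneq G$, and exhibit a stable object of $\Vect(X,\,P)$ whose pullback to $(Z,\,g^*P)$ is decomposable or otherwise non-stable --- for instance by pushing forward a sufficiently general (orbifold) line bundle from $\widetilde Z$, or via a $G$-representation whose restriction to $H$ fails to be isotypic. This is the orbifold analogue of the corresponding step in \cite{BP-GR}, and carrying it out carefully in the formal-orbifold framework (in particular verifying that stability is detected correctly across the branch data $P$, $B_f$ and $PB_f$) is where the real work lies.
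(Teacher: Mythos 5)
Your treatment of $(1)\Leftrightarrow(3)$ (via Proposition \ref{prop-i}) and of $(2)\Rightarrow(1)$ (push forward a degree-one line bundle from the nontrivial intermediate \'etale cover and observe that its pullback has that line bundle as a quotient of the same slope) is sound and matches what the paper does for the latter step in Theorem \ref{stablepullback}. The problem is $(3)\Rightarrow(2)$, where there are two genuine gaps. First, your description of $E\otimes f_*\mathcal{O}_Y$ is inconsistent with Proposition \ref{prop-i}: that proposition says $f_*\mathcal{O}_Y$ is \emph{semistable of degree zero} as an object of $\Vect(X,\,B_f)$, so as an orbifold bundle its Harder--Narasimhan filtration is trivial and its ``slope-zero part'' is all of $f_*\mathcal{O}_Y$, not $\mathcal{O}_X$. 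Hypothesis $(3)$ only controls the maximal slope-zero subsheaf \emph{belonging to} $\Vect(X,\,P)$, which is not an HN-theoretic condition; the image of a map $E\to E\otimes f_*\mathcal{O}_Y$ is a quotient of $E$ taken in $\Vect(X,\,PB_f)$ and need not lie in (the image of) $\Vect(X,\,P)$, so you cannot conclude it sits inside $E\otimes\mathcal{O}_X$. Second, even granting $\operatorname{End}(f^*E)=k$, the inference ``semistable with only scalar endomorphisms, hence stable'' is false: a non-split extension of $L_2$ by $L_1$, with $L_1\not\cong L_2$ degree-zero line bundles, is semistable, non-stable, and has endomorphism algebra $k$. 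To make this work you would need $f^*E$ to be \emph{polystable}, which you neither assert nor prove.

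The missing idea is the reduction that the paper uses for its positive-characteristic analogue (Theorem \ref{stablepullback} via Lemma \ref{4-main}): base-change along a Galois \'etale cover $(W,\,O)\to(X,\,P)$ trivializing the branch data, so that $E$ becomes an honest $\Gamma$-equivariant bundle on $W$ and $f$ becomes a $\Gamma$-equivariant, genuinely ramified morphism $g\colon Z\to W$ of ordinary curves; one then invokes the equivariant stable-pullback theorem for genuinely ramified covers of curves (\cite{BKP-GR1}, \cite{BP-GR}). In that setting the adjunction computation you attempt is carried out on $W$, where $g_*\mathcal{O}_Z$ really does have maximal destabilizing subsheaf $\mathcal{O}_W$ as an ordinary bundle, and the descent of the socle of $g^*E$ replaces the invalid ``simple $\Rightarrow$ stable'' step. (Note also that the present paper does not reprove Theorem \ref{thm-i}; it quotes it from \cite{GR-char0} and proves the characteristic-$p$ analogues, with $(1)\Leftrightarrow(3)$ obtained only under the additional hypothesis that $f$ is Galois, in Theorem \ref{5.2}.)
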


We extend Proposition \ref{prop-i} and Theorem \ref{thm-i} to the fields of positive characteristic. More precisely, in Theorem
\ref{stablepullback} we show that the above (1) and (2) are equivalent. This improves the main result of \cite{BKP-GR1}. In Theorem \ref{5.2}, (1) and (3) are shown to be equivalent under 
the additional hypothesis that $f$ is Galois. In Proposition \ref{main}, a characterization of genuinely ramified morphisms is given in
the Tannakian framework.

\section{Vector bundles over orbifolds}\label{se1}

Let $X$ be a smooth projective connected curve defined over an algebraically closed field $k$. We briefly recall some basic
definitions from \cite[Section 2]{KP}. For a closed point $x\,\in\, X$, let $\scrK_{X,x}$ denote the fraction field of $\widehat \cO_{X,x}$. 
A \textit{branch data} $P$ on $X$ assigns to each closed point $x\,\in\, X$ a finite Galois extension $P(x)$ of $\scrK_{X,x}$,
satisfying the condition that the extension is trivial for all but finitely many points. The support $\Supp(P)$
of the branch data $P$ is the finite subset of closed points where the field extension is actually nontrivial.

Given any two branch data $P_1$ and $P_2$ on $X$, we say that $P_1\,\leq\, P_2$ if
$P_1(x)\,\subset\, P_2(x)$ for all
closed points $x\,\in \,X$. We can define their intersection $P_1\bigcap P_2$ by $(P_1\bigcap P_2)(x)\,:=\,P_1(x)\bigcap 
P_2(x)$ for all closed points $x\,\in\, X$. Here the intersection is taken in a fixed algebraic
closure of $\scrK_{X,x}$. Note that $$\Supp (P_1\cap P_2) \,\,\subseteq\,\, \Supp (P_1)\cap \Supp ( P_2).$$
Similarly we also define their compositum $P_1P_2$ by $$(P_1P_2)(x)\,:=\,P_1(x)\cdot P_2(x)$$
for all closed points $x\,\in\, X$. Note that 
$\Supp (P_1P_2) \,=\, \Supp (P_1) \cup \Supp ( P_2)$. Also we have
$$P_1\cap P_2 \,\,\,\leq\,\,\, P_i\, \,\,\leq\,\,\, P_1P_2$$ for $i\,=\, 1,\, 2$.

The trivial branch data is the one where all the field extensions are trivial; the trivial branch data is denoted by $O$. Let
$f\,:\,Y\,\longrightarrow\, X$ be a finite generically
smooth map and $P$ a branch data on $X$. Then there is a natural branch data $f^*P$ on $Y$ constructed as follows: 
For any closed point $y\in Y$, the field $f^*P(y)$ is the compositum $P(f(y))\scrK_{Y,y}$.

Let $f\,:\,Y\,\longrightarrow\, X$ be a finite generically smooth map. For a closed point $x\,\in\, X$, let $B_f(x)$ be the 
compositum of the Galois closures of the field extensions $\{\scrK_{Y,y}\}_{y\in f^{-1}(x)}$ of $\scrK_{X,x}$. So we have a branch
data $B_f$ that assigns $B_f(x)$ to any $x\, \in\, X$. The support of $B_f$ is evidently the subset over which $f$ is ramified. Note 
that if $f$ is also Galois, then $B_f(x)\,=\,\scrK_{Y,y}$ for any $y\,\in\, Y$ with $f(y)\,=\,x$.

A formal orbifold curve is a pair $(X,\,P)$, where $X$ is a smooth projective curve with $P$ being a branch data on
$X$. A \textit{morphism} of orbifold curves $$f\,:\,(Y,\,Q)\,\longrightarrow\, (X,\,P)$$ is a finite generically smooth morphism
$f\,:\,Y\,\longrightarrow\, X$ such that $Q(y)\,\supset\, P(f(y))$ for every closed point $y\,\in\, Y$. 
The map $f$ is said to be \textit{\'etale} if $Q(y)\,=\,P(f(y))$ for all closed points $y\,\in\, Y$.

For any finite generically smooth morphism $f\,:\,Y\, \longrightarrow \,X$,
$$f\,:\,(Y,\,f^*B_f)\, \longrightarrow \, (X,\,B_f)$$ is an \'etale cover (\cite[Lemma 2.12]{KP}).
We note that a
Galois morphism $f\,:\, (Y,\, O)\,\longrightarrow\, (X,\,P)$ is \'etale if and only if $P\,=\, B_f$. Also, we have $f^*B_f\,=\,O$
for any Galois \'etale morphism $f$.

A branch data $P$ on $X$ is said to be \textit{geometric} if there exists an \'etale Galois covering
map $f\,:\, (Y,\, O)\,\longrightarrow\, (X,\,P)$ of formal orbifolds. If $P$ is a
geometric branch data on $X$, then $(X,\,P)$ is called a \textit{geometric formal orbifold}.

Take a geometric formal orbifold $(X,\,P)$. Fix a Galois \'etale covering
$$f\,:\,(W,\, O)\,\longrightarrow\, (X,\,P)\, ;$$ 
the Galois group of $f$ will be denoted by $G$. An object $\scrV$ in the category $\Vect(X,\,P)$ is a
$G$-equivariant vector bundle $V$ on $W$, while morphisms in $\Vect(X,\,P)$ are the equivariant morphisms of
$G$-equivariant vector bundles on $W$. It should be clarified that the category
$\Vect(X,\,P)$ is actually independent of the choice of $f$ (see \cite[Proposition 3.6]{KP}).

Let
$$
\deg(\scrV)\, :=\, \frac{\deg(V)}{\deg(f)}
$$
be the degree of the object $\scrV$ of the category $\Vect(X,\,P)$. Also define the slope
$$\mu(\scrV) \,:=\, \frac{\deg(\scrV)}{{\rm rank}(V)}.$$
The object $\scrV$ is called \textit{stable} (respectively, \textit{semistable}) if 
$\mu(\scrF) \, <\, \mu(\scrV)$ (respectively, $\mu(\scrF) \, \leq\, \mu(\scrV)$) for all nonzero subobjects $\scrF\, \subset\, \scrV$ of
smaller rank. A semistable vector bundle $\scrV$ is called \textit{strongly semistable} if ${\scrV}^{\otimes j}$ is semistable for
all $j\, \geq\, 1$.

Recall from \cite{Nori} that a vector bundle $V$ is called finite if $p(V)\,=\,q(V)$ for two distinct polynomials 
$p$ and $q$ with nonnegative integer coefficients. For $a$ and $n$ nonnegative integers, $aV^n$ means
$\bigoplus_{i=1}^a V^{\otimes n}$. Any vector bundle of degree zero which is isomorphic to a subbundle of a
quotient bundle of degree zero of a finite bundle is called an essentially finite bundle. Let $\Vect^f(X,\,P)$ (respectively, $\Vect^{ss}(X,\,P)$) 
denote the full subcategory of $\Vect(X,\,P)$ consisting of essentially finite (respectively, strongly 
semistable of degree zero) vector bundles. Also, $\Vect^{et}(X,\,P)$ denotes the full subcategory of 
$\Vect(X,\,P)$ consisting of \'etale trivial vector bundles.

Let $x\,\in\, X$ be a closed point outside the support of $P$. Note that each of
$\Vect^{ss}(X,\,P)$, $\Vect^f(X,\,P)$ and $\Vect^{et}(X,\,P)$, 
equipped with the natural fiber functor associated to $x$, is a neutral Tannakian category; their Tannaka duals are denoted by
$\pi^S(X,\,P)$, $\pi^N(X,\,P)$ and $\pi_1^{et}(X,\,P)$ respectively.

In \cite[Section 3.2]{KP}, for an \'etale morphism $f\,:\,(X_1,\,P_1)\,\longrightarrow\, (X_2,\,P_2)$ a pushforward functor
$\Vect(X_1,\,P_1)\,\longrightarrow\, \Vect(X_2,\,P_2)$ was defined. To explain this construction,
let $(Y_2,\,O)\,\longrightarrow\, (X_2,\,P_2)$ be a Galois \'etale covering map; denote by $\Gamma$ the
Galois group of this map. Let $Y_1$ be the normalization of the fiber product $\widetilde{X_1\times _{X_2} Y_2}$ over $X_2$.
Then the natural projection $\scrf\,:\,Y_1\longrightarrow \, Y_2$ is an \'etale cover. Also, $\scrf$ is a $\Gamma$-equivariant morphism.
Moreover, the natural map $(Y_1,\,O) \,\longrightarrow\, (X_1,\,P_1)$ is an \'etale Galois covering
with Galois group $\Gamma$. Hence an object $\mathcal V$ of $\Vect(X_1,\,P_1)$ is a $\Gamma$-equivariant vector bundle $V$ on $Y_1$. The
direct image $\scrf_*V$ is a $\Gamma$-equivariant vector bundle on $Y_2$, and hence it is an object of $\Vect(X_2,\,P_2)$.
We define this object of $\Vect(X_2,\,P_2)$ as the \emph{direct image} of $\mathcal V$; this direct image will be
denoted by
\begin{equation}\label{d}
\widehat{f}_*{\mathcal V}.
\end{equation}
It should be mentioned that the object $\widehat{f}_*{\mathcal V}$ of $\Vect(X_2,\, P_2)$
is actually independent of the choice of the Galois \'etale covering map $(Y_2,\,O)\,\longrightarrow\, (X_2,\,P_2)$.

\begin{lemma}\label{semistability}
Let $P_1\,\leq\, P_2$ be two branch data. Then there is a natural fully faithful functor $\Vect(X,\,P_1)\,\longrightarrow\, \Vect(X,\,P_2)$.
Moreover, a vector bundle $\scrV$ in $\Vect(X,\, P_1)$ is semistable if and only if it is semistable as a vector bundle in $\Vect(X,\,P_2)$.
\end{lemma}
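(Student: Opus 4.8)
The plan is to realise both categories through a single auxiliary cover and then to compare subobjects of $\scrV$ across the two categories through the resulting functor. First one should note that $P_1$ and $P_2$ are both geometric, as is implicit for the categories $\Vect(X,\,P_i)$ to be defined.

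\emph{Construction of the functor.} Fix a Galois \'etale cover $g_1\,:\,(W_1,\,O)\,\longrightarrow\,(X,\,P_1)$ with Galois group $G_1$, so $B_{g_1}=P_1$. Choose also any Galois \'etale cover $h\,:\,(V,\,O)\,\longrightarrow\,(X,\,P_2)$, let $W$ be a connected component of the normalisation of $W_1\times_X V$, and let $g\,:\,W\,\longrightarrow\, X$ be the induced finite map. Then $W/X$ is Galois, and at a closed point $x$ one has $B_g(x)=B_{g_1}(x)\cdot B_h(x)=P_1(x)\cdot P_2(x)=P_2(x)$ (using $P_1\,\leq\,P_2$), so $B_g=P_2$ and $g\,:\,(W,\,O)\,\longrightarrow\,(X,\,P_2)$ is a Galois \'etale cover. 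The projection $p\,:\,W\,\longrightarrow\,W_1$ satisfies $g=g_1\circ p$, and since $W_1/X$ is Galois, $N:=\Gal(W/W_1)$ is normal in $G:=\Gal(W/X)$ with $G/N\cong G_1$. One then defines $\Phi\,:\,\Vect(X,\,P_1)\,\longrightarrow\,\Vect(X,\,P_2)$ by $\scrV\,\longmapsto\, p^*V$, where $V$ is the $G_1$-equivariant bundle on $W_1$ underlying $\scrV$ and $p^*V$ is given the $G$-equivariant structure induced via $G\,\twoheadrightarrow\, G_1$. This is the pullback along the identity morphism $(X,\,P_2)\,\longrightarrow\,(X,\,P_1)$, and by the independence statements of \cite{KP} it does not depend on the choices made.

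\emph{Full faithfulness and slopes.} Using the projection formula,
\[
\mathrm{Hom}_{\Vect(X,P_2)}(\Phi\scrV,\,\Phi\scrV')\,=\,H^0\bigl(W,\,\mathcal{H}om(p^*V,\,p^*V')\bigr)^{G}\,=\,H^0\bigl(W_1,\,\mathcal{H}om(V,\,V')\otimes p_*\cO_W\bigr)^{G},
\]
and, taking $N$-invariants first (legitimate since $N\,\trianglelefteq\, G$) and using $(p_*\cO_W)^{N}=\cO_{W_1}$, this equals $H^0(W_1,\,\mathcal{H}om(V,\,V'))^{G_1}=\mathrm{Hom}_{\Vect(X,P_1)}(\scrV,\,\scrV')$; hence $\Phi$ is fully faithful. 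Moreover $\deg(p^*V)=|N|\deg(V)$, while the normalising factor for $(X,\,P_2)$ is $\deg(g)=|N|\deg(g_1)$, so $\Phi$ preserves degree, and since it clearly preserves rank, it preserves slope.

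\emph{The semistability equivalence.} One direction is formal: if $\Phi(\scrV)$ is semistable in $\Vect(X,\,P_2)$ then $\scrV$ is semistable in $\Vect(X,\,P_1)$, since $p^*$ is exact and slope-preserving, so a destabilising subobject of $\scrV$ would yield one of $\Phi(\scrV)$. For the converse the point is that $\Phi$ identifies the subobjects of $\scrV$ in $\Vect(X,\,P_1)$ with \emph{all} subobjects of $\Phi(\scrV)$ in $\Vect(X,\,P_2)$. Indeed, let $\scrG\,\subseteq\,\Phi(\scrV)=p^*V$ be a $G$-equivariant subbundle. As $p^*V$ is pulled back from $W_1$, the inertia subgroups of the cover $p$ act trivially on the fibres of $p^*V$ over the ramification points of $p$, hence also on the fibres of the $G$-stable subbundle $\scrG$; by the descent criterion for formal orbifolds (\cite{KP}) this forces $\scrG=p^*G$ for a $G_1$-equivariant subbundle $G=(p_*\scrG)^{N}\,\subseteq\, V$, i.e.\ $\scrG=\Phi(\scrG_0)$ for a subobject $\scrG_0\,\subseteq\,\scrV$ in $\Vect(X,\,P_1)$. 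If $\scrV$ is semistable, then $\mu(\scrG)=\mu(\scrG_0)\,\leq\,\mu(\scrV)=\mu(\Phi(\scrV))$ by the previous paragraph; since $\scrG$ was arbitrary, $\Phi(\scrV)$ is semistable. The main obstacle is precisely this last step --- that a $G$-equivariant subbundle of a bundle pulled back from $W_1$ is again pulled back from $W_1$ --- together with the somewhat delicate compatible choice of the covers $W_1$ and $W$; everything else is bookkeeping.
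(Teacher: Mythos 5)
Your overall architecture is sound and, unlike the paper --- whose proof of this lemma consists of citations to \cite{KP} --- you actually construct the functor and prove both halves. The construction of the common cover $W$ via the compositum (with $B_g=P_1P_2=P_2$), the full-faithfulness computation via the projection formula and $(p_*\cO_W)^N=\cO_{W_1}$, and the degree/slope bookkeeping are all correct. The one step that does not survive scrutiny is the justification of the crucial descent claim: you argue that a $G$-equivariant subbundle $\scrG\subseteq p^*V$ descends to $W_1$ because the inertia subgroups of $p$ act trivially on its fibres. That criterion (Kempf's lemma) is a theorem only for tame, i.e.\ linearly reductive, stabilizers; for wildly ramified covers --- which are precisely the covers formal orbifolds are designed to handle in positive characteristic --- triviality of the stabilizer action on fibres is necessary but not sufficient. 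Concretely, if $p\colon W\to W_1$ is a totally wildly ramified $\mathbb{Z}/p$-cover with ramification point $w$, the inertia group acts trivially on the fibre at $w$ of the $N$-equivariant line bundle $\cO_W(-w)$ (the tame quotient of inertia is trivial, so $\sigma(t)/t\equiv 1$ modulo the maximal ideal), and yet $\cO_W(-w)$ does not descend, since its degree is not divisible by $p$.

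Fortunately the claim you need is true, but it is saved by a different mechanism, namely that $\scrG$ is a \emph{saturated} subsheaf of a bundle pulled back with the \emph{trivial} $N$-structure on $V$. Its generic fibre is an $N$-stable $k(W)$-subspace of $k(W)\otimes_{k(W_1)}V_{\eta}$ for the standard semilinear action, hence by ordinary Galois descent for vector spaces it equals $k(W)\otimes_{k(W_1)}U$ for a unique subspace $U\subseteq V_{\eta}$; letting $G_0\subseteq V$ be the saturation of $U$, both $p^*G_0$ and $\scrG$ are saturated subsheaves of $p^*V$ with the same generic fibre, hence equal. (This also explains why the $\cO_W(-w)$ example is not a counterexample to your claim: it embeds equivariantly only into pullbacks carrying a nontrivial $N$-action on $V$ itself, e.g.\ $p^*\bigl((p_*\cO_W(w))^{\vee}\bigr)$.) You should replace the appeal to the fibre-wise criterion by this generic-point argument; with that repair your proof is complete and self-contained, and is in substance the argument underlying the results of \cite{KP} that the paper invokes.
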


\begin{proof}
The first statement is proved in \cite[Theorem 3.7]{KP} (also see \cite[Theorem 2.5]{tamenori}).
The second statement follows from \cite[Lemma 3.10]{KP}.
\end{proof}

\begin{definition}\label{2.2}
Let $P_1$ and $P_2$ be two geometric branch data on $X$. An object $$\scrE\,\in\, \Vect(X,\,P_1)$$ is said to be from
$\Vect(X,\,P_2)$ if there exists an object $\scrE'\,\in\, \Vect(X,\,P_2)$ such that the images of $\scrE$ and $\scrE'$ in
$\Vect(X,\, P_1P_2)$ --- under the functors 
$\Vect(X,\,P_1)\, \longrightarrow\, \Vect(X,\,P_1P_2)$ and $\Vect(X,\,P_2)\, \longrightarrow\, \Vect(X,\,P_1P_2)$
in Lemma \ref{semistability} --- are isomorphic.
\end{definition}

\begin{lemma}\label{directimage}
Let $f\,:\,Y\, \longrightarrow \,X$ be a finite generically smooth morphism of smooth projective curves.
Consider $\cO_Y$ as an object of $\Vect(Y,\,f^*B_f)$. Then $\widehat{f}_*{\mathcal O}_Y$ (see \eqref{d})
is a semistable vector bundle in $\Vect(X,\,B_f)$ of degree zero. 
\end{lemma}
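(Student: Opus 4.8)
The plan is to unwind the construction of $\widehat{f}_*$ and reduce the assertion to a statement about ordinary vector bundles on a smooth projective curve. Fix a Galois \'etale covering $(Y_2,\,O)\,\longrightarrow\,(X,\,B_f)$ with Galois group $\Gamma$, let $Y_1$ be the normalization of the fiber product $Y\times_X Y_2$, and let $\mathfrak{f}\,:\,Y_1\,\longrightarrow\,Y_2$ be the natural projection, which is an \'etale cover. As recalled in Section \ref{se1}, $(Y_1,\,O)\,\longrightarrow\,(Y,\,f^*B_f)$ is then Galois \'etale with group $\Gamma$, the object $\cO_Y$ of $\Vect(Y,\,f^*B_f)$ is represented by $\cO_{Y_1}$ with its tautological $\Gamma$-equivariant structure, and $\widehat{f}_*\cO_Y$ is represented by the $\Gamma$-equivariant vector bundle $\mathfrak{f}_*\cO_{Y_1}$ on $Y_2$. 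Since the degree in $\Vect(X,\,B_f)$ equals the ordinary degree on $Y_2$ divided by the positive constant $\deg(Y_2\to X)$, and since any subobject of $\mathfrak{f}_*\cO_{Y_1}$ in $\Vect(X,\,B_f)$ is in particular an ordinary subsheaf of $\mathfrak{f}_*\cO_{Y_1}$, it suffices to prove that $\mathfrak{f}_*\cO_{Y_1}$ is a semistable ordinary vector bundle of degree zero on $Y_2$.

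For the degree, note that $\mathfrak{f}$ being \'etale gives $\omega_{Y_1}\,=\,\mathfrak{f}^*\omega_{Y_2}$, so on each connected component of $Y_1$ one has $1-g=d(1-g_{Y_2})$ for the corresponding degree $d$; combining this with the Leray identity $\chi(Y_1,\,\cO_{Y_1})=\chi(Y_2,\,\mathfrak{f}_*\cO_{Y_1})$ and Riemann--Roch yields $\deg(\mathfrak{f}_*\cO_{Y_1})=0$.

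For semistability, observe first that a finite direct sum of semistable bundles of degree zero is again semistable of degree zero, so one may assume $Y_1$ is connected, hence integral. Choose a connected Galois \'etale cover $\pi\,:\,W\,\longrightarrow\,Y_2$ that factors through $\mathfrak{f}$, say $\pi=\mathfrak{f}\circ\rho$ with $\rho\,:\,W\,\longrightarrow\,Y_1$ --- for instance take $W$ to be the Galois closure of $\mathfrak{f}$, which is \'etale over $Y_2$ because $\mathfrak{f}$ is --- and let $G$ be the Galois group of $\pi$. Since $W\times_{Y_2}W$ is a disjoint union of $|G|$ copies of $W$, we get $\pi^*\pi_*\cO_W\,\cong\,\cO_W^{\oplus|G|}$, which is semistable of degree zero; as pullback by the finite surjective morphism $\pi$ reflects semistability and multiplies degree by $\deg(\pi)$, the bundle $\pi_*\cO_W$ is semistable of degree zero on $Y_2$. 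The structure morphism $\cO_{Y_1}\,\longrightarrow\,\rho_*\cO_W$ is injective because $\rho$ is dominant and $Y_1$ is integral, so applying the exact functor $\mathfrak{f}_*$ produces an injection $\mathfrak{f}_*\cO_{Y_1}\,\hookrightarrow\,\mathfrak{f}_*\rho_*\cO_W=\pi_*\cO_W$. A subsheaf of a semistable bundle of slope zero has all its Harder--Narasimhan slopes $\leq 0$, whereas $\deg(\mathfrak{f}_*\cO_{Y_1})=0$ forces the top slope to be $\geq 0$; hence $\mathfrak{f}_*\cO_{Y_1}$ is semistable of degree zero, which is what remained to be proved.

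I expect the main obstacle to lie in the positive characteristic case of the semistability step: when $\mathrm{char}(k)$ divides $|G|$ one cannot split $\cO_{Y_1}$ off as a direct summand of $\rho_*\cO_W$ by an averaging projector, so the argument must rely only on $\cO_{Y_1}$ being a \emph{subsheaf} of $\rho_*\cO_W$ together with the slope estimate coming from semistability of $\pi_*\cO_W$. A secondary, more routine point is the bookkeeping in the first paragraph, namely the passage between the orbifold category $\Vect(X,\,B_f)$ and ordinary vector bundles on $Y_2$.
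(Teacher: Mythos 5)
Your proof is correct, and its first step --- unwinding the definition of $\widehat{f}_*$ to reduce the claim to the assertion that the pushforward of the structure sheaf under the \'etale morphism $\mathfrak{f}\colon Y_1\to Y_2$ is an ordinary semistable bundle of degree zero, with the equivariant and orbifold bookkeeping being harmless because orbifold subobjects are in particular ordinary subsheaves and orbifold degree is ordinary degree up to a positive factor --- is exactly the reduction the paper performs (the paper works with the specific cover $X_e$ given by the Galois closure of $k(Y)/k(X)$, which is legitimate since the construction is independent of the choice of Galois \'etale cover). The difference is in how the key fact is handled: the paper simply cites \cite[Lemma~2.3]{BP-GR} for the statement that $\scrf_*\cO_{Y_e}$ is semistable of degree zero when $\scrf$ is \'etale, whereas you reprove it from scratch by passing to the Galois closure $W\to Y_2$ of $\mathfrak{f}$, observing that $\pi_*\cO_W$ is semistable of degree zero because $\pi^*\pi_*\cO_W\cong\cO_W^{\oplus|G|}$ and pullback along a finite flat surjection reflects semistability, and then trapping $\mathfrak{f}_*\cO_{Y_1}$ between the slope bound $\mu_{\max}\le 0$ (as a subsheaf of $\pi_*\cO_W$) and the Euler-characteristic computation $\deg(\mathfrak{f}_*\cO_{Y_1})=0$. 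This is essentially the standard proof of the cited lemma, and you correctly identify and avoid the characteristic-$p$ pitfall: no averaging projector is used, only the subsheaf inclusion and the slope estimate, so the argument is valid in all characteristics. What your version buys is self-containedness; what the paper's version buys is brevity. Both are sound.
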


\begin{proof}
When the characteristic of $k$ is $0$, this is proved in \cite{P}. 

The morphism $f$ is actually an \'etale morphism of orbifolds $f\,:\,(Y,\,f^*B_f)\,\longrightarrow\, (X,\,B_f)$. Let
\begin{equation}\label{gc}
X_e\,\longrightarrow\, Y\,\longrightarrow\, X
\end{equation}
be the morphism of smooth projective curves 
constructed by setting $k(X_e)$ to be the Galois closure of $k(Y)/k(X)$. Both the maps in $$(X_e,\,O)\,\longrightarrow\,
(Y,\,f^*B_f)\,\longrightarrow\, (X,\,B_f)$$ are \'etale. Denote $G\,=\,\Gal(k(X_e)/k(X))$.
Note that $\cO_Y$ is in $\Vect(Y,\,O)$ and hence it is also in $\Vect(Y,\,f^*B_f)$.

Let $Y_e\,=\, \widetilde{Y\times_X X_e}$ be the normalization of the fiber product $Y\times_X X_e$
over $X$. The action of $G$ on $X_e$ and
the trivial action of $G$ on $Y$ together produce an action of $G$ on $Y_e$.
The projection $$h\,:\,(Y_e,\,O)\,\longrightarrow\, (Y,\,
f^*B_f)$$ is an \'etale Galois covering with Galois group $G$. Clearly,
$\cO_{Y_e}\,=\,h^*\cO_Y$ is a $G$-equivariant line bundle. Hence $\cO_Y$ --- as an object of $\Vect(Y,\,f^*B_f)$ --- is
the $G$-equivariant bundle $\cO_{Y_e}$. Let $\scrf\,:\,Y_e\,=\, \widetilde{Y\times_X X_e} \, \longrightarrow\, X_e$
be the natural projection. By \cite[Section 3.2]{KP}, the direct image $\widehat{f}_*\cO_Y$ (see \eqref{d}) is the
$G$-equivariant vector bundle $\scrf_*\cO_{Y_e}$ on $X_e$. Since $\scrf$ is \'etale, the direct image ${\scrf}_*
{\mathcal O}_{Y_e}$ is semistable of degree zero \cite[p.~12825, Lemma 2.3]{BP-GR}. This proves the lemma.
\end{proof}

\begin{lemma}\label{maximalsubbundle}
Let $P_1\,\le\, P_2$ be two branch data on $X$. Let $\scrV\,\in \,\Vect(X,\,P_2)$ be a semistable vector bundle admitting a subbundle $\scrV\, \supset\, {\scrV}'\,\in\, 
\Vect(X,\,P_1)$ (see Definition \ref{2.2}) such that $\mu(\scrV)\,=\, \mu({\scrV}')$. Then there is a unique maximal
semistable subbundle $\scrV_1\,\subset\, \scrV$ such that
\begin{enumerate}
\item $\mu(\scrV_1)\,=\, \mu(\scrV)$, and

\item $\scrV_1\, \in\, \Vect(X,\,P_1)$.
\end{enumerate}
\end{lemma}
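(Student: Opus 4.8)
The plan is to realize the desired $\scrV_1$ as the largest element of the family
\[
\mathcal{S}\;:=\;\bigl\{\,\scrV_a\subseteq\scrV\ \text{a subbundle}\ :\ \mu(\scrV_a)=\mu(\scrV)\ \text{and}\ \scrV_a\in\Vect(X,\,P_1)\,\bigr\}.
\]
First note that $\mathcal{S}\neq\emptyset$, since $\scrV'\in\mathcal{S}$ by hypothesis, and that every member of $\mathcal{S}$ is automatically semistable: a subobject of $\scrV_a$ is a subobject of $\scrV$, hence has slope at most $\mu(\scrV)=\mu(\scrV_a)$. Consequently a subbundle of $\scrV$ satisfies (1) and (2) precisely when it lies in $\mathcal{S}$, and the assertion of the lemma is exactly that $\mathcal{S}$ has a maximum for inclusion (which is then unique). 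The proof therefore reduces to the claim that \emph{if $\scrV_a,\scrV_b\in\mathcal{S}$ then the internal sum $\scrV_a+\scrV_b\subseteq\scrV$ again lies in $\mathcal{S}$}. Indeed, granting this one chooses $\scrV_1\in\mathcal{S}$ of maximal rank (ranks are bounded by $\operatorname{rank}\scrV$); for any $\scrV_a\in\mathcal{S}$ the claim gives $\scrV_1+\scrV_a\in\mathcal{S}$, and since its rank is at least $\operatorname{rank}\scrV_1$ it equals $\operatorname{rank}\scrV_1$. A subsheaf of the semistable bundle $\scrV$ of slope $\mu(\scrV)$ is automatically saturated, so $\scrV_1$ and $\scrV_1+\scrV_a$ are saturated subbundles of the same rank, hence equal, and $\scrV_a\subseteq\scrV_1$.

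The part of the claim concerning slopes is routine. From the exact sequence
\[
0\longrightarrow\scrV_a\cap\scrV_b\longrightarrow\scrV_a\oplus\scrV_b\longrightarrow\scrV_a+\scrV_b\longrightarrow 0
\]
one gets $\operatorname{rank}(\scrV_a\cap\scrV_b)+\operatorname{rank}(\scrV_a+\scrV_b)=\operatorname{rank}\scrV_a+\operatorname{rank}\scrV_b$ and the analogous identity for degrees. Since $\scrV$ is semistable, both $\scrV_a\cap\scrV_b$ and $\scrV_a+\scrV_b$ have slope at most $\mu(\scrV)$; writing these as degree inequalities and adding them forces equality in both, in particular $\mu(\scrV_a+\scrV_b)=\mu(\scrV)$, and by the saturation remark $\scrV_a+\scrV_b$ is then a subbundle. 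One may carry out this computation after passing to a Galois \'etale cover trivializing $P_2$, where by Lemma \ref{semistability} semistability is the usual notion and degree and rank are additive in exact sequences.

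The substantive part is that $\scrV_a+\scrV_b$ still belongs to $\Vect(X,\,P_1)$. For this I would fix a Galois \'etale covering $(W,\,O)\to(X,\,P_2)$ with Galois group $G$, chosen so that $W\to X$ factors through a Galois \'etale covering $(W/K,\,O)\to(X,\,P_1)$ for some normal subgroup $K\trianglelefteq G$; such a $W$ exists by taking the Galois closure of the normalization of the fibre product of covers realizing $P_1$ and $P_2$. Under the identifications $\Vect(X,\,P_2)\simeq\Vect^{G}(W)$ and $\Vect(X,\,P_1)\simeq\Vect^{G/K}(W/K)$, the functor of Lemma \ref{semistability} is pullback along $\sigma\colon W\to W/K$ followed by inflation $G\to G/K$, so the objects of $\Vect(X,\,P_2)$ that come from $\Vect(X,\,P_1)$ are characterized by the condition that the underlying $G$-equivariant bundle descend along $\sigma$; equivalently, that each inertia subgroup $K_w=\operatorname{Stab}_K(w)$ act trivially on the fibre over $w$. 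This condition passes to internal sums: realizing $\scrV_a$ and $\scrV_b$ as $G$-stable subbundles of $\scrV$ on $W$ on whose fibres the inertia acts trivially, the fibre of the saturated subbundle $\scrV_a+\scrV_b$ at a point $w$ is the sum $(\scrV_a)_w+(\scrV_b)_w$, which is again annihilated by $K_w$; normality of $K$ in $G$ then ensures that the descended bundle carries the induced $G/K$-equivariant structure. Hence $\scrV_a+\scrV_b\in\Vect(X,\,P_1)$, which finishes the claim and the proof.

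I expect the main obstacle to be making the descent criterion along the possibly wildly ramified map $\sigma$ rigorous in positive characteristic, i.e. the equivalence of "$\scrV_a+\scrV_b$ is from $\Vect(X,\,P_1)$" with "the inertia acts trivially on all fibres". This can be handled either by invoking the descent statements underlying \cite[Theorem 3.7]{KP}, or by exploiting the saturation obtained in the slope step: over the \'etale locus of $\sigma$ the descent is ordinary Galois descent, while the triviality of the inertia action together with the fact that $\scrV_a+\scrV_b$ is a subbundle of the fixed bundle $\scrV$ (so that its degree is already determined) rules out any discrepancy over the finitely many branch points.
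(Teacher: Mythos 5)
Your skeleton is the same as the paper's: the family of slope-$\mu(\scrV)$ subbundles lying in $\Vect(X,\,P_1)$ is nonempty and closed under internal sums, and one takes a member of maximal rank. Your slope bookkeeping via the sequence $0\to\scrV_a\cap\scrV_b\to\scrV_a\oplus\scrV_b\to\scrV_a+\scrV_b\to 0$ is correct and is only a mild variant of the paper's argument ($\scrV'+\scrV''$ is a quotient of $\scrV'\oplus\scrV''$, hence has slope $\ge\mu(\scrV)$, and a subsheaf of $\scrV$, hence slope $\le\mu(\scrV)$); the saturation remark and the maximal-rank argument are also fine. The paper then simply asserts ``we also have $\scrV'+\scrV''\in\Vect(X,\,P_1)$'' and stops, so the one step you elaborate beyond the paper is exactly the step where your argument is not sound as written.

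The descent criterion you use --- a $G$-equivariant bundle on $W$ descends along $\sigma\colon W\to W/K$ if and only if each inertia group $K_w$ acts trivially on the fibre --- is false in the wild case, and wild ramification cannot be excluded here since branch data are arbitrary Galois extensions of $\scrK_{X,x}$ in characteristic $p$. Locally, take $K_w=\mathbb{Z}/p$ acting wildly on $R=k[[t]]$ with $R^{K_w}=k[[s]]$, $v_R(s)=p$; the invertible module $tR$ with its induced action has trivial $K_w$-action on $tR/t^2R$ (the tame character of a $p$-group is trivial), yet $(tR)^{K_w}=tR\cap k[[s]]=sk[[s]]$ and $R\cdot sk[[s]]=t^pR\subsetneq tR$, so $tR$ does not descend. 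Thus ``inertia acts trivially on fibres'' is necessary but not sufficient, and your argument proves only the necessary condition for $\scrV_a+\scrV_b$. Your second suggested repair is the one that actually works, but it must replace, not supplement, the fibrewise criterion: the slope computation shows $\sigma^*(\scrV_a^0\oplus\scrV_b^0)\to\scrV_a+\scrV_b$ is a surjection of locally free sheaves; by adjunction the image $N_0$ of $\scrV_a^0\oplus\scrV_b^0\to\bigl(\sigma_*(\scrV_a+\scrV_b)\bigr)^K$ is a $G/K$-equivariant bundle on $W/K$ of rank equal to ${\rm rank}(\scrV_a+\scrV_b)$, the induced map $\sigma^*N_0\to\scrV_a+\scrV_b$ is surjective, and its kernel is a torsion subsheaf of a locally free sheaf, hence zero. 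This gives $\scrV_a+\scrV_b\in\Vect(X,\,P_1)$ without any pointwise inertia analysis; merely citing the full faithfulness in \cite[Theorem 3.7]{KP} does not by itself yield this closure property.
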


\begin{proof}
Since $\scrV$ is semistable, and the slope of the above vector bundle ${\scrV}'$ coincides with that of $\scrV$, it follows that
${\scrV}'$ is also semistable. If ${\scrV}'$ and ${\scrV}''$ are two subbundles of $\scrV$ lying in $\Vect(X,\,P_1)$ such that
$\mu({\scrV}'')\,=\, \mu(\scrV)\,=\, \mu({\scrV}')$, then their sum 
${\scrV}'+{\scrV}''\, \subset\, \scrV$ is again a subbundle with
$\mu({\scrV}'+{\scrV}'')\, =\, \mu(\scrV)$. Indeed, ${\scrV}'+{\scrV}''$ is a quotient of ${\scrV}'\oplus {\scrV}''$,
so $\mu({\scrV}'+{\scrV}'')\, \geq\, \mu({\scrV}')\,=\, \mu({\scrV}'')$, on the other hand, ${\scrV}'+{\scrV}''$ is a subsheaf of
${\scrV}$, so $\mu({\scrV}'+{\scrV}'')\, \leq\, \mu({\scrV})$. We also have ${\scrV}'+{\scrV}''\, \in \, \Vect(X,\,P_1)$.
This proves the existence and uniqueness of a maximal subbundle $\scrV_1\,\subset\, \scrV$ as in the lemma.
\end{proof}

\section{Pullback of stable bundles}

Let $f\,:\,Y\,\longrightarrow\, X$ be a finite generically smooth morphism between smooth connected projective 
curves. Let $P$ be a geometric branch data on $X$.

\begin{definition}\label{3.1}
We say $f\,:\,(Y,\,f^*P)\,\longrightarrow\, (X,\,P)$ to be a genuinely ramified map of formal orbifolds if there is no
intermediate cover $$(Y,\,f^*P)\,\longrightarrow\, (Z,\,Q)\, \longrightarrow\, (X,\,P)$$ where
$(Z,\,Q)\,\longrightarrow\, (X,\,P)$ is a nontrivial \'etale cover of formal orbifold curves. 
\end{definition}

\begin{lemma} \label{4-main}
Let $f\,:\,(Y,\,f^*P)\,\longrightarrow\,(X,\,P)$ be genuinely ramified. Let $(W,\,O)\,\longrightarrow\,(X,\,P)$ be an \'etale Galois
cover with Galois group $\Gamma$. Let
$$g\,:\,Z\, :=\, \widetilde{W\times_X Y}\,\longrightarrow\, W$$ be the normalization
of the fiber product $W\times_X Y$. Then $g$ is a genuinely
ramified morphism. Also the morphism $(Z,\,O)\,\longrightarrow\, (Y,\,f^*P)$ is a Galois \'etale cover
with Galois group $\Gamma$. 
\end{lemma}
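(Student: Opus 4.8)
The plan is to separate the statement into its two assertions and handle the easier one first. The claim that $(Z,\,O)\,\longrightarrow\,(Y,\,f^*P)$ is Galois \'etale with group $\Gamma$ is essentially the construction recalled in Section \ref{se1} (the pushforward setup): one takes the Galois \'etale cover $(W,\,O)\,\longrightarrow\,(X,\,P)$ with group $\Gamma$, forms $Z=\widetilde{W\times_X Y}$, and checks that the projection $Z\,\longrightarrow\,Y$ is \'etale as a map of orbifold curves $(Z,\,O)\,\longrightarrow\,(Y,\,f^*P)$ with the same Galois group $\Gamma$. So for this part I would simply invoke \cite[Section 3.2]{KP} (the construction of $Y_1$ from $Y_2$ in the excerpt) applied to $f\,:\,(Y,\,f^*P)\,\longrightarrow\,(X,\,P)$ and the chosen cover $W$; the normalization of the fiber product is exactly $Y_1$ in that notation, and its \'etale Galois property over $(Y,\,f^*P)$ with group $\Gamma$ is part of that construction.

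The substantive part is that $g\,:\,Z\,\longrightarrow\,W$ is genuinely ramified, i.e.\ that $(Z,\,g^*O)=(Z,\,O)\,\longrightarrow\,(W,\,O)\,\longrightarrow\,(W,\,O)$ admits no nontrivial intermediate \'etale cover. Here I would argue by contradiction. Suppose there is a factorization $(Z,\,O)\,\longrightarrow\,(Z',\,Q')\,\longrightarrow\,(W,\,O)$ with the second map a nontrivial \'etale cover of orbifolds. Since $(W,\,O)$ has trivial branch data and the map is \'etale, $Q'=O$ as well, so $Z'\,\longrightarrow\,W$ is a genuinely nontrivial ordinary \'etale cover of curves (possibly after noting $W$ is connected — if $W$ is disconnected one works component by component, but since $(W,\,O)\,\longrightarrow\,(X,\,P)$ is a connected Galois cover this does not arise). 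The idea is then to descend this cover along the $\Gamma$-action: the $\Gamma$-action on $Z$ over $Y$ should induce a compatible structure making (an appropriate $\Gamma$-orbit/quotient of) $Z'$ descend to an intermediate cover $(Y,\,f^*P)\,\longrightarrow\,(Z'',\,Q'')\,\longrightarrow\,(X,\,P)$ with the second map \'etale and nontrivial, contradicting genuine ramification of $f$.

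The point requiring care — and the main obstacle — is the descent step: a single intermediate cover $Z'$ of $Z$ over $W$ need not be $\Gamma$-stable, so one cannot quotient it directly. The fix is to replace $Z'$ by the compositum (inside a fixed Galois closure of $k(Z)/k(W)$) of all its $\Gamma$-translates, or equivalently to pass to the Galois closure of $k(Z')/k(X)$ and take the maximal subextension unramified over $(X,P)$; genuine ramification of $f$ forces this to be dominated by $k(Y)$, and then tracking the chain $X\subset Y\subset Z$ one shows $Z'\,\longrightarrow\,W$ must have been trivial after all. Concretely, I would: (i) form the Galois closure $L$ of $k(Z')/k(X)$ with group $H$; (ii) let $N\le H$ be the inertia-generated normal subgroup, so $L^N$ corresponds to the maximal cover of $X$ unramified in the orbifold sense over $(X,P)$ that lies below $L$; (iii) observe $k(Y)\supseteq L^N$ since $f$ is genuinely ramified and $L^N\,\longrightarrow\,X$ is such an \'etale intermediate cover that $f$ would have to dominate — wait, one must check $f$ dominates it, which follows because $L\supseteq k(Z)=k(\widetilde{W\times_X Y})$ contains $k(Y)$; and then (iv) conclude that $Z'$, being an intermediate cover of $Z/W$ that becomes trivial after base change to $Y$ in the relevant sense, together with $\Gamma$ being exactly the Galois group of $Z/Y$, forces $k(Z')=k(W)$. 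The key inputs are purely the definition of genuine ramification (Definition \ref{3.1}) and the Galois-theoretic behavior of normalized fiber products; no bundle theory is needed here. I expect step (iii)–(iv), i.e.\ correctly identifying which intermediate cover of $X$ one produces and verifying $f$ dominates it, to be where the argument must be written most carefully.
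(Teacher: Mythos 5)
There are two genuine gaps here. First, the claim that $(Z,\,O)\,\longrightarrow\,(Y,\,f^*P)$ is a Galois \'etale cover with group $\Gamma$ is \emph{not} just the construction of \cite[Section 3.2]{KP}: that construction is stated for an \'etale base morphism and concludes that the top horizontal map ($g$ in our notation) is \'etale, neither of which holds here. The real content of this part is the connectedness of $Z\,=\,\widetilde{W\times_X Y}$, equivalently that $k(W)$ and $k(Y)$ are linearly disjoint over $k(X)$; without this, $Z$ breaks into components and the cover $Z\,\longrightarrow\,Y$ does not have Galois group $\Gamma$. This is precisely where the hypothesis of genuine ramification enters for the first time: if $L\,=\,k(W)\cap k(Y)\,\varsupsetneq\, k(X)$, the normalization of $X$ in $L$ is a nontrivial cover dominated by $W$, hence essentially \'etale for $(X,\,P)$, hence (by \cite[Lemma 2.12]{KP}) a nontrivial \'etale intermediate cover of $(X,\,P)$ dominated by $Y$ --- a contradiction. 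Since $k(W)/k(X)$ is Galois, $k(W)\cap k(Y)\,=\,k(X)$ then gives linear disjointness. Your proposal omits this step entirely.

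Second, your detailed plan (i)--(iv) for the main assertion fails at step (iii). The field $L^N$ (the maximal subextension of the Galois closure $L$ of $k(Z')/k(X)$ that is unramified over $(X,\,P)$) contains $k(W)$, since $k(W)\,\subset\, k(Z')\,\subset\, L$ and $k(W)$ is \'etale over $(X,\,P)$; as $k(W)\cap k(Y)\,=\,k(X)$, one has $L^N\,\not\subset\, k(Y)$ whenever $W\,\neq\, X$. Moreover, genuine ramification gives no domination statement of the kind you invoke --- it only says that \'etale covers \emph{dominated by} $Y$ are trivial, and your justification (``$L\supseteq k(Y)$'') shows the wrong inclusion. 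The correct descent is the one you mention only in passing: replace $Z'$ by a $\Gamma$-stable \'etale cover of $W$ dominated by $Z$ (either the compositum of the $\Gamma$-translates of $k(Z')$ inside $k(Z)$, or, as the paper does, the maximal \'etale subcover $W'$ of $g$, which is automatically $\Gamma$-stable by its uniqueness), and then pass to the quotient $Y''\,=\,W'/\Gamma$. Then $Y''\,\longrightarrow\, X$ is nontrivial, is dominated by $Y\,=\,Z/\Gamma$, and is essentially \'etale for $(X,\,P)$ because $(W',\,O)\,\longrightarrow\,(X,\,P)$ is a composition of \'etale maps; this contradicts genuine ramification of $f$. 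So the strategy is right and the key difficulty is correctly identified, but the argument you actually wrote down to resolve it does not work.
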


\begin{proof}
If $L\,=\,k(W)\cap k(Y)\varsupsetneq k(X)$, then the normalization
$$f'\,:\,Y'\,\longrightarrow \, X$$ 
of $X$ in $L$ is of degree at least two. Note that $f'$ is an essentially \'etale cover of $(X,\,P)$
(see \cite[Definition 2.6(3)]{KP}) because $f'$ is dominated
by $W\,\longrightarrow \, X$. Hence by \cite[Lemma 2.12]{KP} $$f'\,:\,(Y',\,f'^*P)\,\longrightarrow \, (X,\,P)$$ is an \'etale cover,
which contradicts the given condition that $f\,:\,(Y,\,f^*P)\,\longrightarrow \, (X,\,P)$ is a genuinely ramified morphism. So
$k(W)\cap k(Y)\,=\,k(X)$.

Moreover, since $k(W)/k(X)$ is Galois, it follows that $k(W)$ and $k(Y)$ are linearly disjoint over $k(X)$. 
Hence the morphism $g\,:\,Z\,\longrightarrow \, W$ in the statement of the lemma is a finite generically 
smooth morphism of connected nonsingular curves, and $Z\,\longrightarrow \, Y$ is a Galois cover with Galois 
group $\Gamma$. Furthermore, since $(W,\,O)\,\longrightarrow \, (X,\,P)$ is \'etale, the pullback 
$(Z,\,O)\,\longrightarrow \, (Y,\,f^*P)$ is also \'etale by \cite[Proposition 2.14 and Proposition 2.16]{KP}.
 
Suppose that $g$ is not genuinely ramified. Let $h\,:\,W'\,\longrightarrow \, W$ be the maximal \'etale cover 
of $W$ dominated by $g$. For $\gamma\,\in \,\Gamma$, the automorphism of $W$ given by $\gamma$ will also be 
denoted by $\gamma$. The pullback of $h\,:\,W'\,\longrightarrow \, W$ by $\gamma$ is again \'etale. Since $W'$ 
is maximal \'etale, the pullback of $h\,:\,W'\,\longrightarrow \, W$ is again $h$. Hence $\Gamma$ acts on 
$W'$. Let $f''\,:\,Y''\,\longrightarrow \, X$ be the normalization of $X$ in $k(W')^{\Gamma}$. Then 
$Y''\,\longrightarrow \, X$ is dominated by $f\,:\,Y\,\longrightarrow \, X$. The following diagram summarizes 
the situation:
\begin{equation}\label{d1}
\xymatrix{
Z \ar[r]^{\Gamma}\ar[d]\ar@/_1.5pc/[dd]_g & Y\ar@/^1.5pc/[dd]^f\ar[d]\\
W' \ar[d]\ar[r] & Y''\ar[d]\\
W \ar[r]^{\Gamma} & X\\
}
\end{equation}
 
Also $(W',\,O)\,\longrightarrow \, (X,\,P)$ being the composition of two \'etale maps is also \'etale. Hence $Y''\,\longrightarrow \, X$
is essentially \'etale for $(X,\,P)$. So again by \cite[Lemma 2.12]{KP} $$f''\,:\,(Y'',\,f''^*P)\,\longrightarrow \, (X,\,P)$$
is \'etale, which gives a contradiction. Hence $g$ is genuinely ramified. This completes the proof.
\end{proof}

\begin{theorem}\label{stablepullback}
Let $f\,:\,(Y,\,f^*P)\,\longrightarrow \, (X,\,P)$ be a morphism of formal orbifolds. Then $f$ is genuinely ramified
if and only if the pullback of every stable object in $\Vect(X,\,P)$ is stable in $\Vect(Y,\,f^*P)$.
\end{theorem}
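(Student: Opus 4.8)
The plan is to establish the two implications of the equivalence separately; the forward one --- genuine ramification forces pullbacks of stable bundles to be stable --- is the substantial direction, and I would prove it by reducing to the corresponding statement for ordinary curves. Since $P$ is geometric, fix an \'etale Galois cover $\pi\colon (W,\,O)\longrightarrow (X,\,P)$ with group $\Gamma$ and set $Z=\widetilde{W\times_X Y}$, with projection $q\colon Z\longrightarrow W$. If $f$ is genuinely ramified, Lemma \ref{4-main} tells us that $q$ is a genuinely ramified morphism of ordinary smooth curves, that $(Z,\,O)\longrightarrow (Y,\,f^*P)$ is an \'etale Galois cover with group $\Gamma$, and that $q$ is $\Gamma$-equivariant with quotient $f$. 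Consequently $\Vect(X,\,P)$ and $\Vect(Y,\,f^*P)$ become the categories of $\Gamma$-equivariant bundles on $W$ and on $Z$, and the functor $f^*$ is identified with $q^*$ together with the induced equivariant structure.

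Now let $\scrV\in\Vect(X,\,P)$ be stable, with underlying $\Gamma$-equivariant bundle $V$ on $W$. First, $f^*\scrV$ is semistable: $f$ is \'etale as a morphism $(Y,\,f^*(PB_f))\longrightarrow(X,\,PB_f)$, and \'etale pullback preserves semistability, so $q^*V$ is semistable on $Z$. Next, the Harder--Narasimhan filtration and the socle of $V$ are canonical hence $\Gamma$-invariant, so stability of $\scrV$ forces $V$ to be polystable on $W$; write $V=\bigoplus_i V_i$ with each $V_i$ stable of slope $\mu=\mu(\scrV)$, and note that $\Gamma$ must permute the isomorphism classes of the $V_i$ transitively, for otherwise the subsheaf spanned by a $\Gamma$-orbit would be a proper nonzero $\Gamma$-subobject of the same slope. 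Since $q$ is genuinely ramified, the equivalent formulations of genuine ramification recorded in \cite{BP-GR} supply two facts about $q$: each $q^*V_i$ is stable on $Z$; and $q^*$ is injective on isomorphism classes of stable bundles of a fixed slope on $W$, because a nonzero element of $\operatorname{Hom}(q^*V_i,\,q^*V_j)=\operatorname{Hom}\bigl(V_i,\,V_j\otimes q_*\cO_Z\bigr)$ has, using that the maximal semistable subsheaf of $q_*\cO_Z$ of slope zero is $\cO_W$, its image contained in $V_j\subset V_j\otimes q_*\cO_Z$ and therefore gives an isomorphism $V_i\cong V_j$. Hence the pattern of isomorphisms among the $q^*V_i$, and the $\Gamma$-action on them, agree with those for the $V_i$; since $q^*V$ is polystable on $Z$, any $\Gamma$-subobject of slope $\mu(q^*V)$ is a $\Gamma$-invariant direct summand, and these correspond bijectively to the $\Gamma$-invariant direct summands of $V$, namely $0$ and $V$ only. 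As $q^*V$ is semistable it has no $\Gamma$-subobject of larger slope either, so $q^*V$ is stable as an object; that is, $f^*\scrV$ is stable in $\Vect(Y,\,f^*P)$. The main obstacle here is exactly this equivariant bookkeeping: stability of $\scrV$ makes $V$ only polystable, and it is the injectivity of $q^*$ on isomorphism classes of stable bundles --- together with the positive-characteristic analysis of $q_*\cO_Z$ available from \cite{BP-GR} --- that keeps the distinct stable summands of $V$ from colliding under pullback and producing a spurious destabilizing $\Gamma$-subobject.

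For the converse I argue by contraposition. Assume $f$ is not genuinely ramified, so $f=g_0\circ h$ with $g_0\colon(Z_0,\,Q_0)\longrightarrow(X,\,P)$ a nontrivial \'etale cover of formal orbifolds and $h\colon(Y,\,f^*P)\longrightarrow(Z_0,\,Q_0)$; put $d=\deg g_0\geq 2$. The key point is that for \emph{any} object $L$ of $\Vect(Z_0,\,Q_0)$ the natural inclusion $L\hookrightarrow g_0^*\widehat{g_0}_*L$ (adjunction) pulls back along $h$ to an inclusion $h^*L\hookrightarrow f^*\widehat{g_0}_*L$ whose source has the same slope as $f^*\widehat{g_0}_*L$ but rank $\mathrm{rk}(L)<d\cdot\mathrm{rk}(L)=\mathrm{rk}(f^*\widehat{g_0}_*L)$; hence $f^*\bigl(\widehat{g_0}_*L\bigr)$ is never stable. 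It therefore suffices to exhibit a \emph{stable} $\scrE$ of the form $\widehat{g_0}_*L$ with $L$ a line bundle. The object $\widehat{g_0}_*\cO_{Z_0}$ is \'etale trivial, hence semistable of degree zero, of rank $d$, and contains $\cO_X$. If it has a stable subobject or quotient $\scrE$ of rank at least two, then $\scrE$ already works, since Frobenius reciprocity shows $\cO_Y$ occurs as a sub- or quotient object of its $f$-pullback, which then has a subobject of rank at least one and of the same slope. Otherwise every stable subobject and quotient of $\widehat{g_0}_*\cO_{Z_0}$ is a line bundle --- for instance $g_0$ might be an Artin--Schreier cover in characteristic $p$, in which case $\widehat{g_0}_*\cO_{Z_0}$ is a nonsplit unipotent self-extension of $\cO_X$, so that pushing forward $\cO_{Z_0}$ yields only line bundles whose pullbacks along $f$ are trivial. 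In that case, passing if necessary to an abelian quotient cover, we may assume $g_0$ is Galois with nontrivial abelian group $A$, and we choose a line bundle $L$ on $(Z_0,\,Q_0)$ in general position so that $\sigma^*L\not\cong L$ for every $\sigma\in A\setminus\{1\}$. Using $g_0^*\widehat{g_0}_*L\cong\bigoplus_{\sigma\in A}\sigma^*L$ one checks that $\widehat{g_0}_*L$ is simple and, becoming polystable after the \'etale pullback $g_0^*$, is therefore stable, of rank $|A|\geq 2$. In either case a stable object of $\Vect(X,\,P)$ has non-stable pullback, which completes the contrapositive. The only delicate point in this direction is guaranteeing stability of $\widehat{g_0}_*L$ in the degenerate ``abelian'' cases; the rest of the argument is formal.
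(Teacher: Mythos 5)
Your forward direction follows the paper's route exactly: use Lemma \ref{4-main} to replace $f$ by the genuinely ramified $\Gamma$-equivariant map $q\colon Z\longrightarrow W$ and show that $q^*$ preserves $\Gamma$-stability. The paper simply invokes \cite[Proposition 4.2]{BKP-GR1} at this point; you instead re-derive that statement from the results of \cite{BP-GR} (polystability of $V$, stability of each $q^*V_i$, identification of $\text{Hom}(q^*V_i,\,q^*V_j)$ with $\text{Hom}(V_i,\,V_j)$ via the structure of $q_*\cO_Z$). That derivation is essentially sound --- it is close to the computation the paper itself performs inside the proof of Proposition \ref{main} --- so this half is fine.

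The converse is where there is a genuine gap. You correctly observe that $f^*\widehat{g_0}_*L$ is never stable, so everything reduces to exhibiting \emph{one} stable object of the form $\widehat{g_0}_*L$; but you only ever try degree-zero $L$, and both branches of the resulting case analysis are defective. In the first branch, a stable subquotient $\scrE$ of $\widehat{g_0}_*\cO_{Z_0}$ of rank at least two is not of the form $\widehat{g_0}_*L$, so its pullback must be destabilized by a separate argument; your Frobenius-reciprocity remark can be completed for subobjects, but for quotient objects it needs the self-duality of $\widehat{g_0}_*\cO_{Z_0}$, which you do not supply. The second branch is worse: when all stable subquotients of $\widehat{g_0}_*\cO_{Z_0}$ are line bundles you propose to pass to an intermediate \emph{abelian} Galois cover, but no such cover need exist. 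Writing $G$ for the Galois group of the Galois closure of $g_0$ and $H$ for the subgroup cutting out $Z_0$, an intermediate abelian cover of $(X,\,P)$ dominated by $Z_0$ exists only if $H\cdot[G,\,G]\neq G$; this fails already for $G=S_3$ and $H$ of order two, and in characteristic $3$ the permutation module $k[S_3/H]$ has only one-dimensional Jordan--H\"older constituents, so precisely this configuration lands in your second branch with nothing to pass to. (Even when an abelian $A$-cover exists, a degree-zero line bundle with $\sigma^*L\not\cong L$ for all $\sigma\neq 1$ may not exist, e.g.\ in genus zero.) The paper sidesteps all of this with one move you are missing: take $L$ of degree \emph{one} on the intermediate \'etale cover $h\colon(Z,\,Q)\longrightarrow(X,\,P)$. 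Then $\widehat{h}_*L$ is semistable of slope $1/\deg(h)$ and stable, while $L$ is a quotient of $h^*\widehat{h}_*L$ of equal slope and smaller rank, so $f^*\widehat{h}_*L$ is not stable. With that choice the entire converse takes three lines and none of your case analysis is needed.
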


\begin{proof}
Let $a\,:\,(W,\,O)\,\longrightarrow \, (X,\,P)$ be a Galois \'etale covering with Galois group $\Gamma$. A stable object
of $\Vect(X,\,P)$ is a $\Gamma$-stable vector bundle $E$ on $W$. Let $$g\,:\,Z\, :=\,
\widetilde{W\times_X Y} \,\longrightarrow \, W$$ be the
normalized pullback of $f$ (see Lemma \ref{4-main}). By Lemma \ref{4-main}, the morphism $g\,:\,Z\,\longrightarrow \, Y$ is genuinely
ramified. Also note that $g$ is a $\Gamma$-equivariant morphism. By \cite[Proposition 4.2]{BKP-GR1}, the
pullback $g^*E$ is a
$\Gamma$-stable vector bundle on $Z$. The morphism $g\,:\,(Z,\,O)\,\longrightarrow \, (Y,\,f^*P)$ is Galois \'etale with Galois
group $\Gamma$. Hence $g^*E$ is a stable object in $\Vect(Y,\, f^*P)$.

If $f$ is not genuinely ramified, there is a nontrivial \'etale cover of formal orbifold curves
$$h\, :\, (Z,\,Q)\,\longrightarrow\, (X,\,P)$$
such that $f$ factors as 
$$(Y,\,f^*P)\, \stackrel{\phi}{\longrightarrow}\, (Z,\,Q)\, \stackrel{h}{\longrightarrow}\, (X,\,P).$$
Take any line bundle $L$ on $(Z,\, Q)$ of degree one. Then $\widehat{h}_*L$ is a stable vector bundle on
$(X,\, P)$. But $h^*\widehat{h}_*L$ is not stable because $L$ is a quotient of it. Since
$h^*\widehat{h}_*L$ is not stable, we conclude that $\phi^* h^*\widehat{h}_*L\,=\, f^*\widehat{h}_*L$ is not stable.
\end{proof}

\section{Tannakian characterization}\label{TC}

Let $f\,:\,Y\,\longrightarrow\, X$ be a finite generically smooth morphism between smooth connected projective curves.
As before, the branch data on $X$ given by $f$ will be denoted by $B_f$. As in \eqref{gc}, let $b\,:\, X_e
\,\longrightarrow\, X$ be the Galois closure of $f$ with Galois group $G$. Consider the normalization
\begin{equation}\label{e1}
\scrf\,\,:\,\, Y_e\,\,:=\,\, \widetilde{Y\times_X X_e}\,\, \longrightarrow\,\, X_e
\end{equation}
of the fiber product $Y\times_X X_e$. Note that $Y_e$ is $\deg(f)$-copies of $X_e$. As noted in the proof of
Lemma \ref{directimage}, the group $G$ acts on $Y_e$. The direct image
\begin{equation}\label{de}
E\, :=\, \scrf_*\cO_{Y_e},
\end{equation}
where $\scrf$ is the map in \eqref{e1}, is a $G$-equivariant vector bundle on $X_e$, because $\cO_{Y_e}$ is a
$G$-equivariant vector bundle and the projection $\scrf$ is a $G$-equivariant morphism. We have the diagram:
\begin{equation}\label{cd}
\xymatrix{
& & Y_e\ar[rdd]_a^G\ar[ld]_{\scrf} \\
& X_e\ar@{-->}[rrd]\ar[rdd]_b^G \\
& & & Y\ar[ld]_f\\
& & X
}
\end{equation}

We saw in the proof of Lemma \ref{directimage} that $E$ is $\widehat{f}_*\cO_Y\, \in\, \Vect(X,\,B_f)$.

\begin{remark}
If the map $f$ is Galois, then $Y\,=\,X_e$. So $Y_e\,=\, G\times X_e$, where $G\,=\,\text{Gal}(f)$.
Hence in that case we have $$E\,=\,\scrf_*\cO_{Y_e}\,=\,k[G]\otimes_k \cO_{X_e}.$$
\end{remark}

Let $\scrC(f)$ be the neutral Tannakian subcategory of $\Vect^{ss}(X,\,B_f)$ defined by the full subcategory 
generated by $E$ in \eqref{de}.

Note that $\cO_{X_e}\, \in\, \Vect^{ss}(X,\,B_f)$. Let $\scrC(b)$ denote the full 
neutral Tannakian subcategory of $\Vect^{ss}(X,\,B_f)$ generated by $k[G]\otimes_k\cO_{X_e}$.

\begin{proposition}\label{prop-n}
The equality $\scrC(f)\,=\,\scrC(b)$ holds. Hence the Tannaka dual of $\scrC(f)$ is the Galois group $G$.
\end{proposition}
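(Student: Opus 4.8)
The plan is to realize both $\scrC(f)$ and $\scrC(b)$ as the essential image of $\mathrm{Rep}_k(G)$ inside $\Vect^{ss}(X,\,B_f)$ under one fixed tensor embedding, and then deduce the statement from elementary representation theory of the finite group $G$. Throughout I use the étale Galois cover $b\,:\,(X_e,\,O)\,\to\,(X,\,B_f)$ with group $G$ (constructed in the proof of Lemma \ref{directimage}), so that an object of $\Vect(X,\,B_f)$ is a $G$-equivariant bundle on $X_e$ and $\cO_X$ corresponds to $\cO_{X_e}$ with its canonical $G$-equivariant structure.

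First I would introduce the functor $\iota\,:\,\mathrm{Rep}_k(G)\,\to\,\Vect^{ss}(X,\,B_f)$, $V\,\mapsto\, V\otimes_k\cO_{X_e}$, endowed with the tensor-product $G$-equivariant structure. Since the underlying bundle of $\iota(V)$ is trivial, $\iota(V)$ has degree $0$ and is strongly semistable, so $\iota$ does land in $\Vect^{ss}(X,\,B_f)$; it is visibly a tensor functor compatible with direct sums and duals. Using $H^0(X_e,\,\cO_{X_e})=k$ one gets $\mathrm{Hom}(\iota V,\,\iota W)=(V^\vee\otimes W)^G=\mathrm{Hom}_G(V,\,W)$, so $\iota$ is fully faithful; and evaluating at a point of $X_e$ over the chosen base point of $X$ identifies the fibre functor of $\Vect^{ss}(X,\,B_f)$ restricted to the image of $\iota$ with the forgetful fibre functor of $\mathrm{Rep}_k(G)$.

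The key technical point — and the step I expect to be the main obstacle — is that $\iota$ is closed under subobjects: every subobject of $\iota(V)$ in $\Vect^{ss}(X,\,B_f)$ is of the form $\iota(V')$ for a subrepresentation $V'\subseteq V$. Since a subobject of $\iota(V)$ in $\Vect^{ss}(X,\,B_f)$ is necessarily a $G$-equivariant subbundle of degree $0$, this reduces to the statement that a degree-$0$ subbundle $\mathcal W$ of a trivial bundle $\cO_{X_e}^{\oplus n}$ is ``constant'', i.e. equals $V'\otimes\cO_{X_e}$ for some $V'\subseteq k^n$. For this I would pass to the trivial subbundle $\mathcal W_0=H^0(\mathcal W)\otimes\cO_{X_e}\subseteq\mathcal W$ (automatically saturated, since it already has degree $0$ inside a semistable degree-$0$ bundle), and then show $\mathcal W/\mathcal W_0\hookrightarrow\cO_{X_e}^{\oplus n}/\mathcal W_0$ is a degree-$0$ subbundle with $H^0=0$, hence zero: a nonzero stable degree-$0$ Jordan–Hölder subsheaf $\mathcal V_1$ of it would admit a nonzero map to the stable bundle $\cO_{X_e}$ through some coordinate projection, forcing $\mathcal V_1\cong\cO_{X_e}$ and contradicting $H^0(\mathcal V_1)=0$. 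Consequently $\iota$ identifies $\mathrm{Rep}_k(G)$ with a full Tannakian subcategory of $\Vect^{ss}(X,\,B_f)$ closed under subquotients, $\otimes$, duals and $\oplus$, and in particular $\langle\iota(M)\rangle^{\otimes}=\iota(\langle M\rangle^{\otimes})$ for every $G$-module $M$.

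It then remains to locate $E$ and $k[G]\otimes_k\cO_{X_e}$ inside this copy of $\mathrm{Rep}_k(G)$. Computing the pushforward along the self-cover $X_e\times_X X_e=\coprod_{g\in G}X_e\,\to\,X_e$ shows $k[G]\otimes_k\cO_{X_e}=\widehat b_*\cO_{X_e}=\iota(k[G])$, the regular representation. Writing $H=\Gal(k(X_e)/k(Y))\le G$, the factorization $(X_e,\,O)\,\to\,(Y,\,f^*B_f)\,\to\,(X,\,B_f)$ together with the projection formula gives $E=\widehat f_*\cO_Y=\iota\big(\mathrm{Ind}_H^G\mathrm{Res}_H^G\mathbf{1}\big)=\iota(k[G/H])$, the permutation representation on the cosets (this matches the description $E=\scrf_*\cO_{Y_e}$ with $Y_e=\coprod_{G/H}X_e$ and $G$ acting by left translation on $G/H$, and it specializes to $E=\iota(k[G])$ when $f$ is Galois). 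Now every $kG$-module embeds into a direct sum of copies of $k[G]$ (as $kG$ is self-injective), so $k[G]$ generates $\mathrm{Rep}_k(G)$, whence $\scrC(b)=\langle\iota(k[G])\rangle^{\otimes}=\iota(\mathrm{Rep}_k(G))$. On the other hand $k[G/H]$ is a \emph{faithful} $G$-representation: its kernel is $\bigcap_{g\in G}gHg^{-1}$, which is trivial precisely because $k(X_e)$ is the Galois closure of $k(Y)/k(X)$; and a faithful representation generates all of $\mathrm{Rep}_k(G)$ (the common kernel of the objects of $\langle k[G/H]\rangle^{\otimes}$ is contained in $\ker(k[G/H])=\{e\}$, and by Tannakian duality such a subcategory equals $\mathrm{Rep}_k(G)$). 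Therefore $\scrC(f)=\langle\iota(k[G/H])\rangle^{\otimes}=\iota(\mathrm{Rep}_k(G))=\scrC(b)$, and since $\iota$ is an equivalence onto this subcategory compatible with fibre functors, the Tannaka dual of $\scrC(f)$ is $G$.
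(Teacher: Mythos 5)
Your proposal follows essentially the same route as the paper: both reduce the statement to the representation theory of $G$, identifying $E$ with the permutation module $k[G/H]$ for $H=\Gal(k(X_e)/k(Y))$ and using that $k[G/H]$ is faithful (because $X_e$ is the Galois closure of $Y/X$), hence tensor-generates $\mathrm{Rep}_k(G)$. You are more explicit in two places, and usefully so: you set up the embedding $\iota\colon \mathrm{Rep}_k(G)\to \Vect^{ss}(X,B_f)$, $V\mapsto V\otimes_k\cO_{X_e}$, and verify that it is fully faithful and closed under subobjects --- this is exactly the (unstated) justification the paper needs to pass from $\mathcal{C}(G/H)=\mathrm{Rep}(G)$ to $\scrC(f)=\scrC(b)$ --- and you compute $E=\iota(k[G/H])$ on the nose, whereas the paper only establishes the containment $E\subseteq k[G]\otimes_k\cO_{X_e}$ via the fibre product $\widetilde{X_e\times_X X_e}$ and then appeals to the representation-theoretic generation statement.

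One step in your ``key technical point'' needs repair. You assert that $\mathcal W/\mathcal W_0$ has $H^0=0$, where $\mathcal W_0=H^0(\mathcal W)\otimes\cO_{X_e}$. This does not follow from $H^0(\mathcal W_0)=H^0(\mathcal W)$: the long exact sequence only gives an injection $H^0(\mathcal W/\mathcal W_0)\hookrightarrow H^1(\mathcal W_0)$, which is nonzero in positive genus, so a section of the quotient need not lift to a section of $\mathcal W$. The conclusion is still within reach by a small detour: your Jordan--H\"older argument shows that if $\mathcal W/\mathcal W_0\neq 0$ then it contains a copy of $\cO_{X_e}$, which by the rank-one case is a \emph{constant} line $\ell\otimes\cO_{X_e}\subseteq (k^n/H^0(\mathcal W))\otimes\cO_{X_e}$; letting $U\subseteq k^n$ be the preimage of $\ell$, the sheaf $\mathcal W\cap(U\otimes\cO_{X_e})$ is a full-rank degree-zero subsheaf of $U\otimes\cO_{X_e}$, hence equals it, so $U\subseteq H^0(\mathcal W)$, contradicting $\dim U=\dim H^0(\mathcal W)+1$. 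With this patch (or any standard proof that degree-zero subbundles of trivial bundles are constant), your argument is complete and agrees with the paper's.
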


\begin{proof}
Let $\widetilde{f}\, :\, X_e\, \longrightarrow\, Y$ be the map in \eqref{cd}.
The normalization of the fiber product $X_e\times_X X_e$ will be denoted by $M$. Let $\varphi\,:\, M\, \longrightarrow\, X_e$
be the projection to the second factor. The map
$$
\widetilde{f}\times {\rm Id}\,\, :\,\, X_e\times_X X_e\, \longrightarrow\, Y\times_X X_e,\,\ \
(x_1,\, x_2)\, \longmapsto\, (\widetilde{f}(x_1), \, x_2)
$$
produces a map $g\, :\, M\, \longrightarrow\, Y_e$. This map $g$ satisfies the equation
$$
\scrf\circ g\,=\, \varphi\, ,
$$
where $\scrf$ is the map in \eqref{e1}. This implies that 
\begin{equation}\label{z1}
E \, \subset\, \varphi_* {\mathcal O}_M
\end{equation}
(see \eqref{de}). But $\varphi_* {\mathcal O}_M\, =\, k[G]\otimes_k\cO_{X_e}$ because the
map $b$ in \eqref{cd} is Galois with Galois group $G$. So from \eqref{z1} it follows that
\begin{equation}\label{z2}
\scrf_*\cO_{Y_e} \, \subset\, k[G]\otimes_k\cO_{X_e}\, .
\end{equation}
{}From \eqref{z2} it follows immediately that $\scrC(f)$ is a full subcategory of $\scrC(b)$.

Consider the neutral Tannakian category $\text{Rep}(G)$ defined by all algebraic representations of $G$
in finite dimensional $k$--vector spaces. Consider the subgroup $$H\, :=\,\text{Gal}(a) \, \subset\, G$$
which is the Galois group of the morphism $a\, :\, Y_e\, \longrightarrow\, Y$ in \eqref{cd}. The left-translation
action of $G$ on $G/H$ makes $k[G/H]\, \in\, \text{Rep}(G)$. Let $\mathcal{C}(G/H)$ be the full neutral Tannakian
subcategory of $\text{Rep}(G)$ generated by $k[G/H]$. Since $b\, :\, X_e\, \longrightarrow\, X$ is the
Galois closure of $f$, it follows that 
$$
\mathcal{C}(G/H)\,=\, \text{Rep}(G)\, .
$$
{}From this it follows that the subcategory $\scrC(f)$ of $\scrC(b)$ actually coincides with $\scrC(b)$.
\end{proof}

In the set-up of Proposition \ref{prop-n}, let $P$ be a geometric branch data on $X$. Let $\scrC_P(f)$ be the full neutral
Tannakian subcategory of $\scrC(f)$ consisting of objects which are from $\Vect(X,\,P)$ (in the sense of \eqref{2.2}). Let $A$
be the Tannaka dual of $\scrC_P(f)$. Proposition \ref{prop-n} says that the Tannaka dual of $\scrC(f)$ is $G$. So we have
a natural epimorphism
\begin{equation}\label{alpha}
\alpha\,:\,G\,\longrightarrow\, A.
\end{equation}

\begin{lemma}\label{lem-n}
Let $H'\, \subset\, G$ be the kernel of the homomorphism $\alpha$ in \eqref{alpha}.
Let $\phi\,:\,Y'\,\longrightarrow\, X$ be the normalization of $X$ in $k(X_e)^{H'}$,
so $Y'\,=\, X_e/H'$. Then $\phi\,:\, (Y',\,\phi^*P)\,\longrightarrow\, (X,\,P)$ is the unique maximal \'etale
cover of $(X,\, P)$ dominated by $X_e$.
\end{lemma}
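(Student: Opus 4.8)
The goal is to identify the covering $Y' = X_e/H'$ as the maximal \'etale cover of $(X,P)$ dominated by $X_e$. The strategy is to show two things: first that $\phi:(Y',\phi^*P)\to(X,P)$ is \'etale, and second that any intermediate cover $X_e\to(Z,\psi)\to(X,P)$ with $(Z,\psi)\to(X,P)$ \'etale is in fact dominated by $Y'$, i.e. $\Gal(k(X_e)/k(Z))\supseteq H'$. Both statements will be translated into statements about the Tannakian category $\scrC_P(f)$ and its dual $A$.

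First I would handle \'etaleness of $\phi$. By definition of $H'$ as the kernel of $\alpha:G\to A$, the group $A = G/H'$ is the Tannaka dual of $\scrC_P(f)$, which is a subcategory of $\scrC_P(f)\subseteq\Vect(X,P)$ consisting of strongly semistable degree-zero bundles that are from $\Vect(X,P)$. The key point is that $\phi_*\cO_{Y'}$ (as an object of $\Vect(X,B_f)$, via $\widehat{\phi}_*$) decomposes into the isotypic pieces of $E=\scrf_*\cO_{Y_e}=k[G/H]\otimes\cO_{X_e}$-type representations that factor through $A$; more precisely, $k[G/H']$ is the regular representation of $A$ pulled back to $G$, and it lies in $\scrC_P(f)$ by Proposition \ref{prop-n} together with the fact that $\scrC_P(f)$ is generated by objects from $\Vect(X,P)$ whose Tannaka dual is the quotient $A$. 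Since every object of $\scrC_P(f)$ is from $\Vect(X,P)$, the bundle corresponding to $k[G/H']$ is from $\Vect(X,P)$, and this bundle is exactly $\widehat{\phi}_*\cO_{Y'}$. An object of $\Vect(X,B_f)$ that is from $\Vect(X,P)$ and equals a pushforward $\widehat{\phi}_*\cO_{Y'}$ of a structure sheaf along a cover forces $\phi:(Y',\phi^*P)\to(X,P)$ to be \'etale: the branch data $B_\phi\leq B_f$ and the fact that $\phi_*\cO_{Y'}$ comes from $\Vect(X,P)$ means $B_\phi\leq P$ when restricted appropriately, so $\phi^*P(y')=P(\phi(y'))$; alternatively, this is the orbifold analogue of the characterization that $\phi$ is \'etale iff $\phi_*\cO_{Y'}$ is an object of $\Vect(X,P)$ rather than just $\Vect(X,B_f)$ (cf. \cite[Lemma 2.12]{KP} and the surrounding discussion).

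Next, for maximality, suppose $X_e\to Z\to X$ is an intermediate cover with $\psi:(Z,\psi^*P)\to(X,P)$ \'etale, and let $K=\Gal(k(X_e)/k(Z))\subseteq G$. I must show $H'\subseteq K$, equivalently that the quotient $G/K = \Gal(k(Z)/k(X))$ is a quotient of $A=G/H'$. Since $\psi$ is \'etale for $(X,P)$, the pushforward $\widehat{\psi}_*\cO_Z$ is an object of $\Vect(X,P)$; and since $X_e$ dominates $Z$, this object lies in $\scrC(f)=\scrC(b)$ (it is a subobject of $\widehat{b}_*\cO_{X_e}=k[G]\otimes\cO_{X_e}$, which generates $\scrC(b)$). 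Hence $\widehat{\psi}_*\cO_Z\in\scrC_P(f)$, so its Tannaka dual $G/K$ is a quotient of the dual $A$ of $\scrC_P(f)$ --- precisely because the full Tannakian subcategory generated inside $\scrC(f)$ by an object from $\Vect(X,P)$ sits inside $\scrC_P(f)$, and Tannaka duals of such subcategories are quotients of $A$. This gives $H'\subseteq K$, i.e. $Z$ is dominated by $Y'$. Uniqueness of the maximal such cover is then automatic from the existence of a largest one.

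The main obstacle I anticipate is the first half: cleanly arguing that the bundle $\widehat{\phi}_*\cO_{Y'}$ (equivalently, the object of $\scrC(f)$ corresponding to the regular representation $k[A]$ of $A$) being "from $\Vect(X,P)$" is equivalent to $\phi:(Y',\phi^*P)\to(X,P)$ being \'etale, rather than merely being essentially \'etale or having a branch data bounded by $B_f$. This requires unwinding the Definition \ref{2.2} of "from $\Vect(X,P)$" in terms of the compositum $PB_f$ and matching it with the pointwise condition $\phi^*P(y')=P(\phi(y'))$ that defines \'etaleness of orbifold morphisms; the input from \cite[Lemma 2.12]{KP}, which says $(Y',\phi^*B_\phi)\to(X,B_\phi)$ is always \'etale, together with $B_\phi\leq B_f$, should let me reduce to checking compatibility of $P$ with $B_\phi$ at each point. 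Everything else is a routine translation between covers, Galois groups, and the corresponding Tannakian subcategories via Proposition \ref{prop-n}.
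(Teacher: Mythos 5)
Your maximality argument is essentially the paper's: for any \'etale $\psi\colon (Z,\psi^*P)\to (X,P)$ dominated by $X_e$, the subcategory generated by $\psi_*\cO_Z$ sits inside $\scrC_P(f)$, forcing $\Gal(k(X_e)/k(Z))\supseteq H'$. That half is fine. The problem is the first half, and you have correctly diagnosed it yourself: your proof of \'etaleness of $\phi$ rests on the unproved claim that if $\widehat{\phi}_*\cO_{Y'}$ is ``from $\Vect(X,P)$'' in the sense of Definition \ref{2.2}, then $\phi\colon (Y',\phi^*P)\to(X,P)$ is \'etale. Definition \ref{2.2} only says the equivariant bundle becomes isomorphic to one coming from $\Vect(X,P)$ after passing to $\Vect(X,PB_f)$; extracting from this the pointwise condition $\scrK_{Y',y'}\subset P(\phi(y'))$ at every closed point is exactly the content you would need, and neither \cite[Lemma 2.12]{KP} (which concerns $(Y',\phi^*B_\phi)\to(X,B_\phi)$, a different pair of branch data) nor anything else you cite supplies it. As written, the argument establishes only that $\widehat{\phi}_*\cO_{Y'}$ lies in $\scrC_P(f)$, not that the cover itself is \'etale over $(X,P)$; the ``alternatively'' clause is an assertion of the desired equivalence, not a proof of it.

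The paper avoids this issue entirely by staying on the Tannakian side one step longer. Since $\scrC(b)$, hence $\scrC(f)$, hence $\scrC_P(f)$ consists only of \'etale trivial bundles, $\scrC_P(f)$ is a full subcategory of $\Vect^{et}(X,P)$, so its Tannaka dual $A$ is a finite quotient of $\pi_1^{et}(X,P)$. A finite quotient of $\pi_1^{et}(X,P)$ corresponds, by the Galois correspondence for formal orbifolds, to an \'etale $A$-cover of $(X,P)$, and this cover is identified with $(Y',Q)$ where $Q$ is the branch data induced by $X_e\to X_e/H'=Y'$. \'Etaleness is thus automatic from the correspondence rather than deduced from a property of the pushforward bundle. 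If you want to salvage your route, you would need to prove the equivalence ``$\widehat{\phi}_*\cO_{Y'}$ is from $\Vect(X,P)$ $\Longleftrightarrow$ $\phi$ is \'etale over $(X,P)$'' as a separate lemma; the cleaner path is to insert the observation that $\scrC_P(f)\subset\Vect^{et}(X,P)$ and invoke the surjection $\pi_1^{et}(X,P)\twoheadrightarrow A$.
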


\begin{proof}
Let $Q$ denote the branch data on $Y'$ given by the quotient map $$X_e\, \longrightarrow\, X_e/H' \,=\, Y'.$$
The category $\scrC(b)$ in Proposition \ref{prop-n} contains only \'etale trivial bundles, and hence from
Proposition \ref{prop-n} it follows that $\scrC(f)$ contains only \'etale trivial bundles. Therefore,
$\scrC_P(f)$ contains only \'etale trivial bundles. In other words, $\scrC_P(f)$ is a full subcategory of
the neutral Tannakian category $\Vect^{et}(X,\,P)$. Consequently, there is
a natural surjection between their Tannaka duals
$$
\pi_1^{et}(X,\,P)\, \longrightarrow\, A\, \longrightarrow\, e ,
$$
where $A$ is as in \eqref{alpha}.
Hence the induced $A$-cover $(Y',\,Q)\, \longrightarrow\, (X,\,P)$ is \'etale, where $Q$ is defined above.

Let $\psi\,:\,(Z,\,P')\, \longrightarrow\, (X,\,P)$ be any \'etale cover which is dominated by $X_e$. Let
$\scrC(\psi)$ denote the neutral Tannakian subcategory of $\Vect^{ss}(X,\,B_f)$ defined by the full subcategory 
generated by $\psi_*{\mathcal O}_Z$.
Then $\scrC(\psi)$ is a subcategory of $\scrC_P(f)$. Hence $Z$ is dominated by $Y'$ establishing
that $\phi: (Y',\,\phi^*P)\,\longrightarrow\, (X,\,P)$ is the maximal \'etale cover of $(X,\,P)$ dominated by $X_e$.
\end{proof}

\begin{proposition}\label{main}
Let $f:Y\, \longrightarrow\, X$ be a finite generically smooth morphism, and let $P$ be a geometric branch data on $X$. Let
$\scrC_P(f)$ be the full Tannakian subcategory of $\scrC(f)$ 
consisting of objects which are from $\Vect(X,\,P)$ (in the sense of \eqref{2.2}). Let $A$ be the Tannaka dual of $\scrC_P(f)$, and
let $\alpha\,:\,G\,\longrightarrow\, A$ be the natural epimorphism. Then the following five statements are equivalent:
\begin{enumerate}
\item The morphism $(Y,\,f^*P)\,\longrightarrow\, (X,\,P)$ is genuinely ramified.

\item $\alpha(\Gal(k(X_e)/Y))\,=\,A$.

\item $\pi_1(f)\,:\,\pi_1^{S}(Y,\,f^*P)\,\longrightarrow\, \pi_1^{S}(X,\,P)$ is surjective.

\item $\pi_1(f)\,:\,\pi_1^{N}(Y,\,f^*P)\,\longrightarrow\, \pi_1^{N}(X,\,P)$ is surjective.

\item $\pi_1(f)\,:\,\pi_1^{et}(Y,\,f^*P)\,\longrightarrow\, \pi_1^{et}(X,\,P)$ is surjective.
\end{enumerate}
\end{proposition}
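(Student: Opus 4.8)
The plan is to prove the chain of implications $(1)\Rightarrow(2)\Rightarrow(5)\Rightarrow(4)\Rightarrow(3)\Rightarrow(1)$, using the Tannakian dictionary and the maximality statement of Lemma \ref{lem-n} as the main engine. First I would unpack the meaning of $(2)$: writing $H'=\ker\alpha$ and $Y'=X_e/H'$, Lemma \ref{lem-n} identifies $\phi:(Y',\phi^*P)\to(X,P)$ as the maximal \'etale cover of $(X,P)$ dominated by $X_e$. The subgroup $\Gal(k(X_e)/Y)\subset G$ is the one whose fixed field cuts out $Y$; its image $\alpha(\Gal(k(X_e)/Y))$ equals $A$ precisely when $k(X_e)^{H'}\subset k(Y)$, i.e.\ when $Y$ dominates $Y'$. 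So $(2)$ says exactly that $f$ factors through the maximal intermediate \'etale cover, which is another way of saying that maximal cover is trivial, i.e.\ $(Y,f^*P)\to(X,P)$ is genuinely ramified. This gives $(1)\Leftrightarrow(2)$ more or less directly from Lemma \ref{lem-n} once one checks the group-theoretic translation: if there were a nontrivial intermediate \'etale cover $(Z,Q)\to(X,P)$ with $f$ factoring through it, then $Z$ (being dominated by $Y$, hence by $X_e$) would by Lemma \ref{lem-n} be dominated by $Y'$, forcing $Y'\to X$ to be nontrivial and $\alpha(\Gal(k(X_e)/Y))\subsetneq A$; conversely a proper subgroup gives a nontrivial intermediate \'etale cover.

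Next I would handle $(5)\Rightarrow(4)\Rightarrow(3)$. Since $\Vect^{et}(X,P)\subset\Vect^f(X,P)\subset\Vect^{ss}(X,P)$ (an \'etale trivial bundle is finite, hence essentially finite, and any essentially finite bundle is strongly semistable of degree zero by Nori's theory, which applies in the orbifold setting via the Galois cover presentation), these inclusions induce on Tannaka duals faithfully flat maps $\pi_1^S\to\pi_1^N\to\pi_1^{et}$, and these are compatible with $\pi_1(f)$ on both $Y$ and $X$. A standard diagram chase: if $\pi_1(f)$ is surjective on the $S$-level it is surjective on the quotients $N$ and $et$; so really $(3)\Rightarrow(4)\Rightarrow(5)$ is the easy direction, and I would instead derive $(5)\Rightarrow(3)$ from $(1)\Leftrightarrow(5)$ and $(1)\Leftrightarrow(3)$ treated separately — i.e.\ show each of $(3),(4),(5)$ is individually equivalent to genuine ramification, which decouples the chase. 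For $(5)\Leftrightarrow(1)$: the image of $\pi_1^{et}(f)$ corresponds to the smallest \'etale cover of $(X,P)$ through which $f$ factors, which is the maximal intermediate \'etale cover; $\pi_1^{et}(f)$ is surjective iff this cover is trivial iff $f$ is genuinely ramified — this is essentially the definition together with the Galois correspondence for $\pi_1^{et}(X,P)$.

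For $(3)\Leftrightarrow(1)$ I would use Theorem \ref{stablepullback}. The Tannakian reformulation of that theorem is: $f$ is genuinely ramified iff $f^*:\Vect^{ss}(X,P)\to\Vect^{ss}(Y,f^*P)$ sends stable objects of slope $0$ to stable objects, equivalently (since $f^*$ preserves degree-zero strong semistability) iff $f^*$ induces a faithfully flat — here, a closed immersion after the usual reduction — map on $\pi^S$. More precisely, $\pi^S(f)$ is surjective iff the essential image of $f^*$ generates $\Vect^{ss}(Y,f^*P)$ as a Tannakian category iff $f^*$ reflects the subobject poset of the unit-generated pieces; the key point from Theorem \ref{stablepullback} is that $f^*$ of a stable bundle stays stable, and by a theorem-of-the-hyperplane / Larsen--Pink style argument (or directly: a non-surjective $\pi^S(f)$ would, by Tannakian duality, yield a nontrivial quotient of $\pi^S(X,P)$ trivialized on $Y$, hence a $\Gamma$-sub of some $f_*\cO_Y$-type object, which by Lemma \ref{directimage} and Lemma \ref{maximalsubbundle} produces a nontrivial \'etale subcover) one gets the equivalence. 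For $(4)\Leftrightarrow(1)$ the same scheme works with Nori's fundamental group in place of the $S$-version, noting that genuine ramification is detected already at the level of the \'etale quotient and that $\pi^N$ surjects onto $\pi^{et}$.

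The main obstacle I anticipate is the careful bookkeeping in the $(3)\Leftrightarrow(1)$ step: translating ``pullback of every stable bundle is stable'' (Theorem \ref{stablepullback}) into ``$\pi^S(f)$ is surjective'' is not purely formal, because surjectivity of a morphism of Tannakian fundamental groups is equivalent to the pullback functor being \emph{fully faithful with closed image}, which requires knowing that $f^*$ reflects isomorphisms and that every subobject of $f^*\scrV$ comes from a subobject of $\scrV$ — and stability-preservation gives only part of this. The clean way around it, which I would adopt, is to not prove $(3)\Leftrightarrow(1)$ from scratch but to deduce $(3)$ from $(1)$ by invoking Theorem \ref{stablepullback} together with the standard fact (valid in any neutral Tannakian category) that a tensor functor preserving stable objects of a fixed slope, sending the unit to the unit, and commuting with the fiber functors, induces a faithfully flat homomorphism on fundamental group schemes; and to deduce $(1)$ from $(3)$ by contraposition, using that a nontrivial intermediate \'etale cover $h:(Z,Q)\to(X,P)$ furnishes, via $\widehat h_*$ of a degree-one line bundle as in the proof of Theorem \ref{stablepullback}, a stable object whose $f^*$-pullback is unstable, contradicting $(3)$.
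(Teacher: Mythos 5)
Your overall skeleton is sound and largely coincides with the paper's: the equivalence $(1)\Leftrightarrow(5)$ is essentially definitional, $(3)\Rightarrow(4)\Rightarrow(5)$ follows from the surjections $\pi^S\twoheadrightarrow\pi^N\twoheadrightarrow\pi^{et}$ on the target, and your derivation of $(1)\Leftrightarrow(2)$ from Lemma \ref{lem-n} (the image of $\Gal(k(X_e)/k(Y))$ is all of $A$ iff $k(Y')\cap k(Y)=k(X)$ iff there is no nontrivial intermediate \'etale cover) is exactly the paper's argument. The problem is the step that carries all the weight, namely $(1)\Rightarrow(3)$. You correctly identify what is needed --- by \cite[Proposition 2.21(a)]{deligne-milne}, surjectivity of $\pi^S(f)$ requires that $f^*\colon\Vect^{ss}(X,P)\to\Vect^{ss}(Y,f^*P)$ be fully faithful \emph{and} that every subobject of $f^*\scrE$ be the pullback of a subobject of $\scrE$ --- and you even note that stability-preservation ``gives only part of this''. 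But you then close the gap by invoking a ``standard fact valid in any neutral Tannakian category'' that a tensor functor preserving stable objects of fixed slope and compatible with fiber functors induces a faithfully flat homomorphism of fundamental group schemes. No such general fact exists: preservation of stability does not by itself rule out $f^*E_1\cong f^*E_2$ for non-isomorphic stable $E_1,E_2$ (so full faithfulness can fail), and it says nothing at all about subobjects of $f^*\scrE$ when $\scrE$ is not semisimple. You have in effect restated the difficulty and then assumed it away.

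This is precisely where the paper does the real work. Full faithfulness is obtained from the isomorphism $H^0(W,\mathrm{Hom}(V_1,V_2))\cong H^0(Z,\mathrm{Hom}(g^*V_1,g^*V_2))$ of \cite[Lemma 4.3]{BP-GR}, applied $\Gamma$-equivariantly to the genuinely ramified map $g\colon Z\to W$ produced by Lemma \ref{4-main}. The subobject condition is then proved in three stages: for stable $\scrE$ it follows from Theorem \ref{stablepullback}; for polystable $\scrE=\bigoplus E_i\otimes T_i$ it uses the vanishing of $H^0(Z,\mathrm{Hom}(g^*E_i,g^*E_j))^\Gamma$ for $i\neq j$ (again via \cite[Lemma 4.3]{BP-GR}) to show every subobject of $g^*E$ has the form $\bigoplus g^*E_i\otimes T_i'$; and for general $\scrE$ it invokes the $\Gamma$-invariant Jordan--H\"older filtration together with an analysis of the maximal semistable subsheaf $V'\subset g_*V$ of a degree-zero semistable subbundle $V\subset g^*E$, showing $\deg(V')=0$, $\mathrm{rank}(V')=\mathrm{rank}(V)$ and that $g^*V'\to V$ is an isomorphism (citing \cite{BP2}). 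None of this is formal, and none of it appears in your proposal; without it the implication $(1)\Rightarrow(3)$ (equivalently $(5)\Rightarrow(3)$) is not established.
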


\begin{proof}
 The equivalence of (1) and (5) is trivial. Also note that (3) implies (4) and (4) implies (5).
 
Let $H'\,=\,{\rm kernel}(\alpha)$, and let $\phi\,:\, Y'\,\longrightarrow\, X$ be the normalization of $X$ in $k(X_e)^{H'}$. By
Lemma \ref{lem-n}, $$\phi\,:\, (Y',\,\phi^*P)\,\longrightarrow\, (X,\,P)$$ is the maximal \'etale cover of $(X,\,P)$ dominated by $X_e$.
 Hence (1) is equivalent to the statement that $k(Y')\cap k(Y)\,=\,k(X)$. But this is equivalent to the subgroup $H'
\, \subset\, \Gal(k(X_e)/k(Y))$ being the whole group $G$, which in turn is equivalent to (2).

To prove that (5) implies (3) we need to show that
\begin{enumerate}
\item[{(i)}] the functor $f^*:\Vect^{ss}(X,\,P)\,\longrightarrow\, \Vect^{ss}(Y,\,f^*P)$ is fully faithful, and 

\item[{(ii)}] subobjects of $f^*\scrE$ are pullback bundles.
\end{enumerate}
(See \cite[p.~139, Proposition 2.21(a)]{deligne-milne}.)

Since $f\,:\,(Y,\,f^*P)\,\longrightarrow\, (X,\,P)$ is genuinely ramified, the map $g\,:\,Z\,\longrightarrow\, W$ in \eqref{d1} is genuinely 
ramified by Lemma \ref{4-main}. Hence the natural homomorphism
$$H^0(W,\,\,Hom(V_1,\,V_2))\,\,\longrightarrow\,\, H^0(Z,\,\,Hom(g^*V_1,\,g^*V_2))$$ is 
an isomorphism by \cite[Lemma 4.3]{BP-GR}. When $V_1$ and $V_2$ are $\Gamma$-equivariant, the above natural map is $\Gamma$-equivariant; 
consequently, we have
$$H^0(W,\,Hom(V_1,\,V_2))^{\Gamma} \,\cong\, H^0(Z,\,Hom(g^*V_1,\,g^*V_2))^{\Gamma}.$$

We will now show that all subobjects of $f^*\scrE$ are of the form $f^*{\mathcal V}$, where ${\mathcal V}\, \subset\, \scrE$ is
a subobject. Let $E$ be the $\Gamma$--bundle on $W$ representing $\scrE$. First assume that $\scrE$ is stable. Then $f^*\scrE$ is stable by Theorem \ref{stablepullback}. Therefore,
any subobject of $f^*\scrE$ is either $f^*\scrE$ or $0$. Hence all subobjects of $f^*\scrE$
are of the form $f^*{\mathcal V}$, where ${\mathcal V}\, \subset\, \scrE$. Next assume that $\scrE$ is polystable. So
$$
E\,=\, \bigoplus_{i=1}^n {E}_i\otimes T_i\, ,
$$
where ${E}_1, \, \cdots,\, {E}_n$ are stable $\Gamma$--bundles such that ${E}_i\, \not=\, {E}_j$ if $i\, \not=\, j$,
and $T_1,\, \cdots,\, T_n$ are trivial $\Gamma$--bundles on $W$. In fact $T_i$ is the trivial $\Gamma$--bundle on $W$ with fiber
$$H^0(W, \, \text{Hom}({E}_i,\, E))^{\Gamma}.$$ Let $r_i$ be the rank of $T_i$. Consider the pullback
$$
g^*E\,=\, \bigoplus_{i=1}^n g^*{E}_i\otimes \cO_{Z}^{r_i}.
$$
From Theorem \ref{stablepullback} it follows that each $g^*{E}_i$ is a stable $\Gamma$--bundle. For $1\, \leq\, i,\, j\, \leq\, n$, we
again have (by \cite[Lemma 4.3]{BP-GR}),
$$
H^0(W,\, \text{Hom}({E}_i,\, {E}_j))\,\cong \, H^0(Z,\, \text{Hom}(g^*{E}_i,\, g^*{E}_j)),
$$
and hence $$H^0(Z,\, \text{Hom}(g^*{E}_i,\, g^*{E}_j))^{\Gamma}\,=\, H^0(W,\, \text{Hom}({E}_i,\, {E}_j))^{\Gamma}\,=\, 0,$$ because
${E}_1, \, \cdots,\, {E}_n$ are pairwise non-isomorphic stable $\Gamma$--bundles. Hence any subobject of
$g^*E$ is of the form $\bigoplus_{i=1}^n g^*{E}_i\otimes T'_i$, where $T'_i\, \subset\, \cO_{Z}^{r_i}$ is a trivial $\Gamma$-subbundle.
Consequently, all subobjects of $f^*\scrE$
are of the form $f^*{\mathcal V}$, where ${\mathcal V}\, \subset\, \scrE$.

Finally, for a general subobject of $\scrE$, let
\begin{equation}\label{fw}
0\, \subset\, {F}_1\, \subset\, \cdots \, \subset\, {F}_{\ell-1}
\, \subset\, {F}_{\ell} \,=\, E
\end{equation}
be the Jordan--H\"older filtration of the $\Gamma$--bundle $E$; so for any $1\, \leq\, i\, \leq\, \ell$, the quotient bundle
${F}_i/{F}_{i-1}$ is the unique maximal polystable subbundle of $E/{F}_{i-1}$
\cite[p.~24, Lemma 1.5.5]{HL}. This uniqueness ensures that each $F_i$ is preserved by the action of $\Gamma$ on $E$.
Let
$$
V\, \subset\, g^*E
$$
be a semistable $\Gamma$-subbundle over $Z$ of degree zero. Let
$$
{V'}\, \subset\, g_*{V}
$$
be the maximal semistable subsheaf (the first nonzero term in the Harder--Narasimhan filtration). Then using the above observations
if follows that $\text{degree}(V')\,=\, 0$ and $\text{rank}(V')\,=\, \text{rank}(V)$ (see \cite{BP2}). Moreover, the natural
homomorphism $g^*g_*V\, \longrightarrow\, V$ has the property that its restriction to $g^*V'$ is an isomorphism. This
completes the proof.
\end{proof}

Note that the category $\Vect(X,\,O)$ is the same as $\Vect(X)$, the category of vector bundles on $X$.

Proposition \ref{main} has the following immediate consequence:

\begin{corollary}
 Let $\scrC_0(f)$ be the full Tannakian subcategory of $\scrC(f)$ consisting of objects from $\Vect(X)$. Then $\scrC_0(f)$ is
a Tannakian category. Let $A$ be its Tannaka dual and $a\,:\,G\,\longrightarrow \, A$ the natural epimorphism. Let $H\,=\,
\Gal(k(X_e)/k(Y))$ and let $B$ be the image of $H$ in $A$. Then the following are equivalent:
\begin{enumerate}
\item $f\,:\,Y\,\longrightarrow \, X$ is genuinely ramified.

\item $B\,=\,A$.
\end{enumerate}
\end{corollary}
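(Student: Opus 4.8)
The plan is to deduce the corollary directly from Proposition \ref{main} by specializing to the trivial branch data $P = O$ on $X$. The point is that $\Vect(X, O)$ is literally the category $\Vect(X)$ of vector bundles on $X$ (as noted in the sentence immediately preceding the corollary), so the subcategory $\scrC_0(f)$ of $\scrC(f)$ consisting of objects from $\Vect(X)$ is precisely $\scrC_P(f)$ for $P = O$. Hence $A$ here is the Tannaka dual of $\scrC_O(f)$, and $a : G \to A$ is exactly the epimorphism $\alpha$ of \eqref{alpha} for this choice of $P$. The first task is therefore to observe that $O$ is a geometric branch data — which is immediate, since the identity map $(X, O) \to (X, O)$ is an \'etale Galois cover with trivial Galois group — so that Proposition \ref{main} applies verbatim.

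Next I would unwind what each of the five equivalent statements of Proposition \ref{main} becomes when $P = O$. For statement (1): the morphism $(Y, f^*O) \to (X, O)$ is just $f : Y \to X$ as a map of curves with trivial branch data, and a genuinely ramified map of formal orbifolds with trivial branch data is exactly a genuinely ramified map of curves in the classical sense (Definition \ref{3.1} reduces to the absence of an intermediate nontrivial \'etale cover of $X$). This gives condition (1) of the corollary. For statement (2) of the proposition: $\Gal(k(X_e)/Y)$ in the proposition's notation is what is called $H = \Gal(k(X_e)/k(Y))$ in the corollary (here I should note that the Galois closure $X_e$ of $f$ and the group $G = \Gal(k(X_e)/k(X))$ are the same objects in both statements, since they depend only on $f$ and not on $P$), and $\alpha(H) = A$ is precisely $B = A$ in the corollary's notation, where $B$ is defined as the image of $H$ in $A$ under $a = \alpha$. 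Thus statement (2) of the proposition is exactly statement (2) of the corollary.

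Having made these identifications, the proof is essentially a one-line invocation: by Proposition \ref{main}, statements (1) and (2) of that proposition are equivalent, and under the dictionary $P = O$ just described, these are exactly statements (1) and (2) of the corollary. I would write this out cleanly, being careful to spell out that ``objects from $\Vect(X)$'' means objects from $\Vect(X, O)$ in the sense of Definition \ref{2.2}, which for $P_1 P_2 = O \cdot B_f = B_f$ is just the ordinary condition of descending to a vector bundle on $X$ (compatibly with the $G$-equivariant structure). There is no real obstacle here; the only mild subtlety — and the step I would be most careful about — is confirming that the various objects named in the corollary ($X_e$, $G$, the epimorphism $a$, the subgroup $H$) coincide with the corresponding objects in Proposition \ref{main} when one sets $P = O$, so that the equivalence transfers without any hidden hypothesis. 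Once that bookkeeping is checked, the corollary follows immediately.
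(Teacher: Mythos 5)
Your proposal is correct and matches the paper's intent exactly: the paper offers no separate proof, presenting the corollary as an ``immediate consequence'' of Proposition \ref{main}, obtained precisely by specializing to the trivial (and trivially geometric) branch data $P=O$ and identifying $\Vect(X,O)$ with $\Vect(X)$, as you do. The bookkeeping you carry out (matching $X_e$, $G$, $\alpha$, $H$, and $B$ across the two statements) is the whole content of the deduction.
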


\section{Pushforward of the structure sheaf}

Consider the diagram in \eqref{cd}.
Since $\cO_{X_e}\,=\,b^*\cO_X$, it follows that $\cO_{X_e}$ has a natural $G$-equivariant structure. Note that $\cO_{X_e}$ is a
subbundle of $E$ (defined in \eqref{de}) preserved by the action of
$G$. Since $\cO_{X_e}$ is in $\Vect(X,\,O)$, by Lemma \ref{semistability} it is also in $\Vect(X,\,P\cap B_f)$
and its degree is $0$. So
by Lemma \ref{maximalsubbundle} there exists a unique maximal semistable $G$-equivariant subbundle
\begin{equation}\label{ef}
F\, \subset\, E
\end{equation}
of degree $0$ such that $F\,\in\, \Vect(X,\,P\cap B_f)$.

Let $U$ be the normalized fiber product of $b\,:\,X_e\,\longrightarrow\, X$ and $\alpha\,:\,W\,\longrightarrow\, X$. Let $a\,:\,U\,
\longrightarrow\, W$ and $\beta\,:\, U\,\longrightarrow\, X_e$ be the natural projections. So we have the following diagram:
\begin{equation}\label{e-a}
\xymatrix{
& & & Y_e\ar[dd]^G\ar[ld]_{\scrf} \\
& U\ar[r]^{\beta}\ar[dd]_a & X_e\ar@{-->}[rd]\ar[dd]_b^G \\
& & & Y\ar[ld]_f\\
& W \ar[r]^{\alpha} & X
}
\end{equation}
\begin{lemma}\label{lem5.1}
Let $F_W$ be the maximal degree zero $\Gamma$-subbundle of $(a_*\beta^*E)^G$. The $\Gamma\times G$-equivariant bundles $\beta^*F$
(see \eqref{ef}) and $a^*F_W$ are isomorphic.
\end{lemma}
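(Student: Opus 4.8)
The plan is to identify $F_W$ explicitly as a pushforward of a structure sheaf, and to recognise $\beta^*F$ and $a^*F_W$ as two incarnations of a single object on $U$, namely the image of $F$ in $\Vect(X,\,PB_f)$. First I would record the shape of \eqref{e-a}: since $b\,:\,(X_e,\,O)\,\to\,(X,\,B_f)$ and $\alpha\,:\,(W,\,O)\,\to\,(X,\,P)$ are \'etale Galois with groups $G$ and $\Gamma$, the map $\beta$ is the pullback of $\alpha$ along $b$ and $a$ is the pullback of $b$ along $\alpha$; so $\beta$ is orbifold-\'etale and $\Gamma$-Galois, $a$ is orbifold-\'etale and $G$-Galois, the two actions on $U$ commute, $U/\Gamma=X_e$, $U/G=W$, and $(U,\,O)\,\to\,(X,\,PB_f)$ is \'etale \cite[Propositions 2.14, 2.16]{KP}. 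Since $\scrf\,:\,Y_e\,\to\,X_e$ is \'etale (proof of Lemma \ref{directimage}), flat base change along $\beta$ gives $\beta^*E=\scrf'_*\cO_V$ with $V:=\widetilde{Y\times_X U}$ and $\scrf'$ \'etale, so $\beta^*E$ is a $\Gamma\times G$-equivariant, semistable, degree-zero bundle on $U$ with $V/G=\widetilde{Y\times_X W}$; taking $G$-invariants of $a_*\beta^*E=(a\scrf')_*\cO_V$ yields $(a_*\beta^*E)^G\cong g_*\cO_Z$ as $\Gamma$-bundles, with $g\,:\,Z:=\widetilde{W\times_X Y}\,\to\,W$. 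Hence $F_W$ is the maximal semistable subsheaf of $g_*\cO_Z$, which by the positive-characteristic socle description (cf.\ \cite{BP-GR}) equals $g'_*\cO_{Z'}$ for $Z'\,\to\,W$ the maximal \'etale intermediate cover of $g$, necessarily $\Gamma$-stable; flat base change then gives $a^*F_W=g''_*\cO_{Z'\times_W U}$ with $g''$ \'etale.

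Next, write $\mathbf F_W\in\Vect(X,\,P)$ for the image of $F\in\Vect(X,\,P\cap B_f)$ under the functor of Lemma \ref{semistability}, realised as a $\Gamma$-bundle on $W$. Applying $(a_*(-))^G$ to the inclusion $\beta^*F\hookrightarrow\beta^*E$ and using the projection formula with $(a_*\cO_U)^G=\cO_W$ shows $(a_*\beta^*F)^G=\mathbf F_W$ is a degree-zero $\Gamma$-subbundle of $(a_*\beta^*E)^G$, so $\mathbf F_W\subseteq F_W$. By transitivity of the functors of Lemma \ref{semistability}, $\beta^*F$ (the image of $F$ through $\Vect(X,\,B_f)$) and $a^*\mathbf F_W$ (its image through $\Vect(X,\,P)$) both represent the image of $F$ in $\Vect(X,\,PB_f)\simeq\{\text{equivariant bundles on }U\}$, hence $\beta^*F\cong a^*\mathbf F_W$ as $\Gamma\times G$-equivariant bundles. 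So it all comes down to the reverse inclusion $F_W\subseteq\mathbf F_W$, for which I would push $a^*F_W$ down to $X_e$ along $\beta$ and invoke the maximality of $F$.

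The key local fact will be: for $w\in W$ with $\alpha(w)\in\Supp P$, the inertia $I_w\subset\Gamma$ of $\alpha$ acts trivially on $Z'_w$, since $\Gamma$ acts on $Z=\widetilde{W\times_X Y}$ over $Y$, so an element of $I_w$ commutes with the injection $Z_w\hookrightarrow Y_{\alpha(w)}$ and therefore fixes $Z_w\supseteq Z'_w$ pointwise. Granting this, $C:=(Z'\times_W U)/\Gamma$ is \'etale over $X_e=U/\Gamma$, and unwinding normalized fibre products identifies $C=\widetilde{Y_1\times_X X_e}$ with $Y_1:=Z'/\Gamma\subseteq Z/\Gamma=Y$; since $Y_1$ is dominated by $Y$ and $Z'\to X$ ramifies only over $\Supp P$, the branch data $B_\rho$ of $\rho\,:\,Y_1\,\to\,X$ satisfies $B_\rho\le P\cap B_f$, so $\rho$ is \'etale for $(X,\,P\cap B_f)$. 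Then $(\beta_*(a^*F_W))^\Gamma=((\beta g'')_*\cO_{Z'\times_W U})^\Gamma=c_*\cO_C$ (with $c\,:\,C\to X_e$) is a degree-zero $G$-equivariant subbundle of $(\beta_*\beta^*E)^\Gamma=E$ (using $a^*F_W\hookrightarrow a^*(a_*\beta^*E)^G\hookrightarrow\beta^*E$ via the counit, injective as it is an isomorphism over the \'etale locus of $a$), and it is from $\Vect(X,\,P\cap B_f)$, being the image of $\widehat{\rho}_*\cO_{Y_1}\in\Vect^{et}(X,\,P\cap B_f)$; by Lemma \ref{maximalsubbundle} it is therefore contained in $F$. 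Applying $(a_*\beta^*(-))^G$ to this inclusion and unwinding fibre products once more gives $n_*\cO_N\subseteq\mathbf F_W$ with $N=\widetilde{W\times_X Y_1}$; but the canonical map $Z'\to N$ over $W$ is injective (by the inertia fact) between \'etale covers of $W$ of the same degree $\deg(Z'/W)=\deg(Y_1/X)$, hence an isomorphism, so $F_W=g'_*\cO_{Z'}=n_*\cO_N\subseteq\mathbf F_W$. Combining, $F_W=\mathbf F_W$ and $\beta^*F\cong a^*\mathbf F_W=a^*F_W$ as $\Gamma\times G$-equivariant bundles.

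I expect the genuine obstacle to be making the third paragraph precise: verifying that the scheme-theoretically ramified maps $\beta$ and $a$ behave as though \'etale on the bundles involved, so that $(\beta_*(a^*F_W))^\Gamma$ really is a degree-zero subbundle of $E$ coming from $\Vect(X,\,P\cap B_f)$. This hinges on the ``inertia acts trivially on $Z'$'' observation, on carefully tracking normalizations of fibre products and both equivariant structures through \eqref{e-a} (including the case where $U$ is disconnected, which happens exactly when $k(X_e)$ and $k(W)$ are not linearly disjoint over $k(X)$), and on the positive-characteristic socle result identifying $F_W=g'_*\cO_{Z'}$.
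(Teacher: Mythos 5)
Your overall strategy is genuinely different from the paper's, and it contains a gap that I do not see how to repair as written. The entire third paragraph rests on the ``key local fact'' that for $w\,\in\, W$ with $\alpha(w)\,\in\,\Supp P$ the inertia group $I_w\,\subset\,\Gamma$ acts trivially on $Z'_w$, justified by the assertion that $Z_w\,\hookrightarrow\, Y_{\alpha(w)}$ is injective. That assertion is false: a point of the \emph{normalized} fibre product $Z\,=\,\widetilde{W\times_X Y}$ lying over $(w,\,y)$ corresponds to a branch, i.e.\ to a factor of $\scrK_{W,w}\otimes_{\scrK_{X,x}}\scrK_{Y,y}$, and there are $[\scrK_{W,w}\cap\scrK_{Y,y}\,:\,\scrK_{X,x}]$ of these; whenever the local extensions fail to be linearly disjoint, several points of $Z_w$ map to the same $y$, and $I_w\,=\,\Gal(\scrK_{W,w}/\scrK_{X,x})$ permutes them transitively rather than fixing them. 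A concrete failure: take $f\,=\,\alpha$ itself (a $\mathbb{Z}/2$-cover with $P\,=\,B_\alpha$), so that $Z\,=\,\widetilde{W\times_X W}$ is the disjoint union of the graphs of the elements of $\Gamma$, $Z'\,=\,Z$, and the nontrivial element of $I_w$ swaps the two points of $Z'_w$ over a ramification point $w$. Since both the \'etaleness of $C\,\to\, X_e$ and the injectivity of $Z'\,\to\, N$ are deduced from this fact, the proof of the reverse inclusion $F_W\,\subseteq\,\mathbf F_W$ collapses. (Your first two paragraphs --- the identification $(a_*\beta^*E)^G\,\cong\, g_*\cO_Z$, the description of $F_W$ via the maximal \'etale subcover, and the inclusion $\mathbf F_W\,\subseteq\, F_W$ together with $\beta^*F\,\cong\, a^*\mathbf F_W$ --- are essentially sound, modulo care with disconnectedness, but they only give one direction.)

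The paper's own argument is much shorter and avoids all of this bookkeeping. One first observes that the image of $F_W$ in $a_*\beta^*E$ lands in $a_*\beta^*F$, so adjunction produces a natural homomorphism $a^*F_W\,\longrightarrow\,\beta^*F$ of bundles on $U$. Away from the preimage of $\Supp(B_f)\cup\Supp(P)$ this map is an isomorphism, by combining two facts: the Harder--Narasimhan filtration of a pullback under a generically smooth surjection of curves is the pullback of the Harder--Narasimhan filtration, and for an \'etale Galois cover $\varphi$ the counit $\varphi^*((\varphi_*E)^G)\,\to\, E$ is an isomorphism. Since $a^*F_W$ and $\beta^*F$ both have degree zero, a generically injective map between them has zero kernel and a torsion cokernel of degree zero, hence is an isomorphism everywhere. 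If you want to salvage your approach, you would need to replace the inertia argument by something like this generic comparison; but at that point you have reproduced the paper's proof, and the explicit identification of $F_W$ as $g'_*\cO_{Z'}$ becomes unnecessary for this lemma (it is the substance of Theorem \ref{5.2} instead).
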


\begin{proof}
Note that $F_W$ is a slope zero subbundle of $a_*\beta^*E$. Since $F$ is the maximal subbundle of $E$ of slope zero, and $E/F$ is
a semistable bundle of negative slope, the image of $F_W$ in $a_*\beta^*E$ lies in $a_*\beta^*F$. By adjointness we get a natural
map of bundles
\begin{equation}\label{ef2}
a^*F_W\,\longrightarrow \, \beta^*F
\end{equation}
over $U$.

It can be shown that away from the preimage of $B_f\cup P$ the homomorphism 
in \eqref{ef2} is an isomorphism. Indeed, this follows immediately from the following two facts:
\begin{enumerate}
\item For any generically smooth surjective map $\varphi\, :\, Z_1\, \longrightarrow\, Z_2$ of smooth projective curves, and any
vector bundle $E$ on $Z_2$, the Harder--Narasimhan filtration of $\varphi^*E$ is the pullback of the Harder--Narasimhan filtration of
$E$ (see \cite[p.~12823--12824, Remark 2.1]{BP-GR}).

\item If $\varphi\, :\, Z_1\, \longrightarrow\, Z_2$ is an \'etale Galois covering map of smooth curves with Galois group
$G$, and $E$ is a $G$-equivariant vector bundle on $Z_1$, then the natural map $$\varphi^*((\varphi_* E)^G)\,\longrightarrow \, E$$
is an isomorphism.
\end{enumerate}

Since both $a^*F_W$ and $\beta^*F$ are degree zero bundles, the generically isomorphic homomorphism
in \eqref{ef2} is actually an isomorphism. Indeed, the cokernel of this map is a torsion sheaf and its degree
is $\text{degree}(\beta^*F)- \text{degree}(a^*F_W)\,=\, 0$, and hence the cokernel is the zero sheaf.
Consequently, the homomorphism in \eqref{ef2} is an isomorphism.
\end{proof}

The goal is to prove the following:

\begin{theorem}\label{5.2}
Let $f\,:\,(Y,\,f^*P)\,\longrightarrow\, (X,\,P)$ be a morphism of formal orbifolds. Assume
$f$ is also a Galois cover. Then the following are equivalent:
\begin{enumerate}
\item $f\,:\,(Y,\,f^*P)\,\longrightarrow\, (X,\,P)$ is genuinely ramified.

\item $F\,=\,\cO_{X_e}$.

\item $F_W\,=\,\cO_W$.
\end{enumerate}
\end{theorem}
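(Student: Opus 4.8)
The plan is to establish the cycle (1) $\Rightarrow$ (2) $\Rightarrow$ (3) $\Rightarrow$ (1), exploiting the Galois hypothesis on $f$ so that $Y = X_e$ and $E = k[G]\otimes_k\cO_{X_e}$ with $G = \Gal(f)$. The implication (2) $\Rightarrow$ (3) should be the easy direction: Lemma \ref{lem5.1} identifies $\beta^*F$ with $a^*F_W$ as $\Gamma\times G$-equivariant bundles over $U$, and since $\beta$ and $a$ are faithfully flat, descending the isomorphism $\beta^*F \cong \beta^*\cO_{X_e}$ back down along $a$ gives $F_W \cong \cO_W$ (one checks the equivariant structures match, using that $\cO_{X_e} = b^*\cO_X$ carries the canonical structure). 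Conversely (3) $\Rightarrow$ (2) runs the same way: $a^*F_W = a^*\cO_W = \cO_U = \beta^*\cO_{X_e}$, and faithfully flat descent along $\beta$ forces $F = \cO_{X_e}$. So (2) and (3) are equivalent by Lemma \ref{lem5.1} alone, and it remains only to tie either one to genuine ramification.

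For (1) $\Rightarrow$ (2): assume $f$ is genuinely ramified. By Theorem \ref{stablepullback} the pullback of every stable object in $\Vect(X,\,P)$ stays stable in $\Vect(Y,\,f^*P)$, which via Proposition \ref{main} is equivalent to $\pi_1^S(Y,\,f^*P)\,\longrightarrow\,\pi_1^S(X,\,P)$ being surjective; in particular the same holds for the étale and essentially-finite quotients. Now $F \subset E = \widehat{f}_*\cO_Y$ is the maximal degree-zero subbundle in $\Vect(X,\,P\cap B_f)$, and the category generated by $F$ sits inside $\Vect^{ss}(X,\,P\cap B_f)$. The key point is to argue that $F$ corresponds, under the Tannakian dictionary, to the maximal étale subcover of $(X,\,P\cap B_f)$ dominated by $X_e$ — exactly the subcover controlled by $\scrC_P(f)$ in Lemma \ref{lem-n}. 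Genuine ramification of $f\,:\,(Y,\,f^*P)\,\to\,(X,\,P)$ means $k(Y')\cap k(Y) = k(X)$ for that maximal cover, but since $f$ is Galois with $Y = X_e$, the intersection is all of $k(Y') = k(Y)$ only when $Y'= X$, i.e. the maximal étale subcover is trivial; this forces $F$ to be trivial, namely $F = \cO_{X_e}$. Here I would invoke the uniqueness in Lemma \ref{maximalsubbundle} together with the fact that any nontrivial étale subalgebra of $\widehat{f}_*\cO_Y$ would produce exactly such an intermediate étale cover, contradicting Definition \ref{3.1}.

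For the reverse (2) $\Rightarrow$ (1): suppose $f$ is \emph{not} genuinely ramified, so there is a nontrivial intermediate étale cover $(Y,\,f^*P)\,\stackrel{\phi}{\longrightarrow}\,(Z,\,Q)\,\stackrel{h}{\longrightarrow}\,(X,\,P)$. Since $f$ is Galois, $h$ is dominated by $X_e \to X$, and $\widehat{h}_*\cO_Z \in \Vect^{ss}(X,\,P)$ is a semistable degree-zero bundle containing $\cO_X$ as a subalgebra; pulling up to $X_e$ and comparing with $E = \widehat{f}_*\cO_Y$, the bundle $\widehat{h}_*\cO_Z$ (viewed in $\Vect^{ss}(X,\,B_f)$, which makes sense as $h$ is étale so $B_h \leq B_f$) contributes a subbundle of $E$ lying in $\Vect(X,\,P)\subset\Vect(X,\,P\cap B_f)$ of degree zero and rank $\deg(h) > 1$. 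By maximality, $F$ contains this subbundle, so $\mathrm{rank}(F) \geq \deg(h) > 1$, contradicting $F = \cO_{X_e}$. This completes the cycle.

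The main obstacle I anticipate is the careful bookkeeping of \emph{which} branch data each bundle lives over: $E$ and $F$ live over $B_f$ (resp.\ $P\cap B_f$), whereas the genuine ramification hypothesis and the étale fundamental group are about $P$; one must repeatedly pass between $\Vect(X,\,P)$, $\Vect(X,\,B_f)$ and $\Vect(X,\,P\cap B_f)$ using Lemma \ref{semistability} and Definition \ref{2.2}, and verify that an intermediate étale cover for $(X,\,P\cap B_f)$ dominated by $X_e$ that is \emph{also} dominated by $Y$ is the same thing as an intermediate étale cover for $(X,\,P)$ — this uses that $f$ is genuinely ramified precisely when no such cover exists, together with $\cite[\text{Lemma 2.12}]{KP}$ to upgrade essentially étale to étale. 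The equivariance of all identifications (the $G$-action throughout, and the extra $\Gamma$-action in Lemma \ref{lem5.1}) is routine but must be tracked so that descent is legitimate.
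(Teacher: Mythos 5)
Your handling of (2) $\Leftrightarrow$ (3) via Lemma \ref{lem5.1} matches the paper (modulo the small observation that $\cO_W\,\subseteq\, F_W$ by maximality, so once $a^*F_W\,\cong\,\cO_U$ the degree-zero line bundle $F_W$ containing $\cO_W$ must equal $\cO_W$ --- one does not really need descent). Your (2) $\Rightarrow$ (1) is essentially the paper's (3) $\Rightarrow$ (1) transported up to $X_e$: a nontrivial intermediate \'etale cover contributes a degree-zero subbundle of rank $>1$ lying in $\Vect(X,\,P\cap B_f)$, contradicting maximality of $F$. Those parts are sound.

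The genuine gap is in your (1) $\Rightarrow$ (2). You assert that genuine ramification forces $F\,=\,\cO_{X_e}$ because ``any nontrivial \'etale subalgebra of $\widehat{f}_*\cO_Y$ would produce an intermediate \'etale cover.'' This begs the question: what must be ruled out is a nontrivial degree-zero semistable subbundle $F$ from $\Vect(X,\,P\cap B_f)$, and nothing in your argument shows that such an $F$ is, or generates, a subalgebra of $E$. Equipping the maximal slope-zero piece with an algebra structure is precisely the content of Proposition \ref{prop-i}, which the paper has only in characteristic zero; in positive characteristic this is the hard step and cannot be waved through via Lemma \ref{maximalsubbundle}, which only gives existence and uniqueness of $F$. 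The paper sidesteps it entirely: since $f$ is Galois, $E\,=\,k[G]\otimes_k\cO_{X_e}$ and $[a_*\beta^*E]^G\,=\,a_*\cO_U$; by Lemma \ref{4-main} the map $a\,:\,U\,\longrightarrow\,W$ of honest curves is genuinely ramified, and the theorem of \cite{BP-GR} for ordinary curves then says the maximal slope-zero subsheaf of $a_*\cO_U$ is $\cO_W$, giving (3) directly. Your Tannakian detour could in principle be repaired --- for Galois $f$ one has $\Gal(k(X_e)/k(Y))\,=\,\{1\}$, so Proposition \ref{main}(2) forces the Tannaka dual $A$ of $\scrC_P(f)$ to be trivial, hence $F\,\cong\,\cO_{X_e}^{\oplus r}$ as an object of $\scrC_P(f)$, and $r\,=\,1$ follows from $\dim H^0(Y_e,\,\cO_{Y_e})^G\,=\,1$ --- but these steps are absent from your write-up, and the mechanism you actually cite does not close the argument.
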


\begin{proof}
The equivalence of (2) and (3) is a consequence of Lemma \ref{lem5.1}.
 
Since $f$ is Galois $Y\,=\,X_e$ and $E=\cO_{X_e}\times k[G]$. Note that
$$a_*\beta^*E\,=\,a_*\beta^*\cO_{X_e}\otimes_k k[G]\,=\,a_*\cO_U\otimes_k k[G]$$
(see \eqref{e-a}). Hence $[a_*\beta^*E]^G\,=\,a_*\cO_U$. 
 
Since $f$ is genuinely ramified by Lemma \ref{4-main}, the map $a\,:\, U\,\longrightarrow \, W$ is genuinely ramified. Therefore the degree
zero part of the Harder--Narasimhan filtration of $a_*\cO_U$ is actually
$\cO_W$ (by \cite{BP-GR}). So $F_W$, which is the degree zero
part of the Harder--Narasimhan filtration of $[a_*\beta^*E]^G$, is also $\cO_W$. This proves that (1) implies (3).

Suppose that $f\,:\,(Y,\,f^*P)\,\longrightarrow\, (X,\,P)$ is not genuinely ramified.
Let $$g\,:\,Y'\,\longrightarrow\, X$$ be the maximal 
intermediate cover such that $g\,:\,(Y',\,g^*P)\,\longrightarrow \, (X,\,P)$ is \'etale. Let $Z'$ be the normalized fiber product of $Y'$ and 
$W$ over $X$, and let $h\,:\,Z'\,\longrightarrow\, W$ be the natural projection. Then $h$ is \'etale and it is dominated by $a$. Hence 
$h_*\cO_{Z'}$ is a subbundle of $a_*\cO_U$ of degree zero, and its rank is the same as the degree of $h$. But $F_W$ is the maximal degree zero
subsheaf of $[a_*\beta^*E]^G\,=\,a_*\cO_U$. This contradicts (3). Hence (3) implies (1).
\end{proof}

\section*{Acknowledgements}

We are very grateful to the referee for helpful comments to improve the manuscript.
The first author is partially supported by a J. C. Bose Fellowship (JBR/2023/000003).

\end{document}